%

\documentclass[12pt]{amsart}
\usepackage{t1enc}
\usepackage[latin2]{inputenc}
\usepackage{verbatim}
\usepackage{amsmath,amsfonts,amssymb,amsthm}
\usepackage[mathcal]{eucal}
\usepackage{enumerate}
\usepackage{graphics}
\setlength{\oddsidemargin}{-2truemm}
\setlength{\evensidemargin}{-2truemm}
\setlength{\topmargin}{-1.6truecm} \setlength{\textheight}{23.5cm}
\textwidth16.5cm
\parskip 1.5mm
\parindent 3mm
\setlength{\abovedisplayskip}{10pt plus 2.5pt minus 7.5pt}
\setlength{\belowdisplayskip}{10pt plus 2.5pt minus 7.5pt}

 \newtheorem{thm}{Theorem}[section]
 \newtheorem{cor}[thm]{Corollary}
 \newtheorem{lem}[thm]{Lemma}
 
 \theoremstyle{definition}
 
 \theoremstyle{remark}
 \newtheorem{rem}[thm]{Remark}
 \numberwithin{equation}{subsection}
 

 \DeclareMathOperator{\mes}{mes}
 \DeclareMathOperator{\supp}{supp}

 \newcommand{\Ne}{\in\mathbb{N}}
 \newcommand{\N}{\in\mathbb{N}}
 \newcommand{\n}{\N}
 \newcommand{\p}{\in\mathbb{P}}

  \newcommand{\m}[1]{\mes\left(#1\right)}


\begin{document}

\title[]{Almost everywhere convergence of Fej\'er means of  two-dimensional triangular Walsh-Fourier series}
\author{Gy\"orgy G\'at}

\address{Institute of Mathematics, University of Debrecen, H-4002 Debrecen, Pf. 400, Hungary}
\email{gat.gyorgy@science.unideb.hu}
\subjclass[2010]{42C10}
\keywords{Fej\'er means, triangle Walsh-Paley-Fourier series, a.e. convergence.}

\thanks{Research supported by the
Hungarian National Foundation for Scientific Research (OTKA),
grant no.  K111651.}

\date{}

\begin{abstract}
In 1987 Harris proved \cite{har} - among others- that for each $1\le p<2$ there exists a two-dimensional function $f\in L^p$ such that its triangular Walsh-Fourier series diverges almost everywhere. In this paper we investigate the Fej\'er (or $(C,1)$) means of the triangle  two variable Walsh-Fourier series of $L^1$ functions. Namely,
we prove the a.e. convergence
$\sigma_n^{\bigtriangleup}f = \frac{1}{n}\sum_{k=0}^{n-1}S_{k, n-k}f\to f$ ($n\to\infty$)
for each integrable two-variable function $f$.
\end{abstract}

\maketitle
\section{introduction}

In 1971 Fefferman proved \cite{fef} the following result with respect to the trigonometric system.
Let $P$ be an open polygonal region in $\mathbb{R}^2$, containing the
origin. Set
\[
\lambda P = \{(\lambda x^1, \lambda x^2) : (x^1, x^2) \in P \}
\]
for $\lambda > 0$. Then for every $1<p, f\in L^p([-\pi,\pi]^2)$ it holds the relation
\[
\lim_{\lambda\to\infty}\sum_{(n^1,n^2)\in \lambda P}\hat f(n^1, n^2)\exp(\imath (n^1y^1+n^2y^2)) = f(y^1, y^2) \quad \mbox{for a.e.} \quad  y=(y^1,y^2)\in [-\pi,\pi]^2.
\]
That is, $S_{\lambda P}f\to f$ a.e. Sj\"olin gave \cite{sjo} a better result in the case when $P$ is a rectangle. He proved the a.e. convergence for the wider class $f\in L ( \log L)^3 \log \log L$ and for functions  $f\in L ( \log L)^2 \log \log L$ when $P$ is a square. This result for squares is improved by Antonov \cite{ant}. Verifying the result of Sj\"olin with even one more $\log$. That is, for functions $f\in L ( \log L)^2 \log \log \log L$.
There is a sharp constrast between the trigonometric and the Walsh case. In 1987 Harris proved \cite{har} for the Walsh system that
if $S$ is a region in $[0,\infty) \times [0,\infty )$ with piecewise $C^1$ boundary not
always parallel to the axes and $1\le p < 2$, then there exists an $f \in L^p$ such
that $S_{\lambda P}f$ diverges a.e. and in $L^p$ norm as $\lambda\to\infty$.
These results justify the investigation of the Fej\'er (or $(C,1)$) means of triangular sums of two-dimensional Fourier series defined as (see e.g. \cite{gw}):
\[
\sigma^{\bigtriangleup}_n f := \frac{1}{n}\sum_{k=0}^{n-1}S_{k}^{\bigtriangleup}f,
\]
where the triangular partial sums $S_{k}^{\bigtriangleup}f$  defined as
\[
S_{k}^{\bigtriangleup}f(x^1,x^2):=\sum_{i=0}^{k-1}\sum_{j=0}^{k-i-1}\hat{f}
(i,j) \omega_{i}(x^1) \omega_{j}(x^2).
\]
That is, $S_{k}^{\bigtriangleup}f$ is nothing else but $S_{k\Delta}f$, where $\Delta$ is the triangle with vertices $(0,0), (1,0)$ and $(0,1)$.
For the trigonometric system Herriot proved \cite{her} the a.e. (and norm) convergence $\sigma^{\bigtriangleup}_n f \to f $  ($f\in L^1$).
The aim of this paper is verify this result with respect to the Walsh system. The main difficulty is that in the trigonometric case we have a
a simple closed formula for the  kernel functions of this triangular means and this is not the case in the Walsh situation.

Next, we give a brief introduction to the theory of the Walsh-Fourier series.

Let ${\mathbb P}$ denote the set of positive integers, ${\mathbb N} := {\mathbb P}
\cup \{0\}$, and $I:= [0,1)$. For any set $E$ let $E^2$ the cartesian product
$E\times E$. Thus ${\mathbb N}^2$ is the set of integral lattice points in the
first quadrant and $I^2$ is the unit square.
Let $E^1 = E$ and fix $j=1$ or $2$. Denote the $j$ -dimensional Lebesgue
measure of any set $E \subset I^j$ by $\mes(E)$.
Denote the $L^p(I^j)$ norm of any function $f$ by $\|f\|_p$ $(1\le p\le
\infty)$.

Denote the dyadic expansion of $n\in {\mathbb N}$ and $x\in I$
by $n = \sum _{j=0}^{\infty}n_j2^j$ and $x =  \sum_{j=0}^{\infty}x_j2^{-j-1}$
(in the case of $x=\frac{k}{2^m} \, \, k,m\n$ choose the expansion which
terminates  in zeros). $n_i, \, x_i$ are  the $i$-th coordinates of $n, \, x$,
respectively. Set $e_i:= 1/2^{i+1}\in I$, the $i$ th coordinate of $e_i$ is
$1$, the rest are zeros ($i\in \mathbb N$).
Define the dyadic addition $+$ as
\[
x+y = \sum_{j=0}^{\infty}|x_j - y_j|2^{-j-1}.
\]
The sets  $I_n(x) := \{y\in I : y_0=x_0, ... ,y_{n-1}=x_{n-1}\}$ for  $x\in I$,
$I_n:= I_n(0) \, $ for $n\p$ and $I_0(x) := I$ are the dyadic intervals of
$I$.
The set of the  dyadic intervals on $I$ is denoted by  ${\mathcal I} := \{I_n(x) :
x\in I , n\in {\mathbb N}\}$. Denote by
${\mathcal A}_n$ the $\sigma $ algebra
generated by the sets $I_n(x) \, (x\in I)$ and $E_n$ the
conditional expectation operator with respect to ${\mathcal A}_n \,
(n\in {\mathbb N})$. $C$ denotes a constant which may be different from line to line.

For $t=(t^1,t^2)\in I^2$, $b=(b^1,b^2)\n^2$ set the two-dimensional dyadic
rectangle, i.e.  two-dimensional dyadic interval
\[
I_b(t):= I_{b^1}(t^1)\times I_{b^2}(t^2).
\]

For $n=(n^1,n^2)\n^2$ denote by $E_n = E_{n^1,n^2}$ the two-dimensional
expectation operator with respect to the $\sigma$ algebra
${\mathcal A}_n={\mathcal A}_{n^1,n^2}$
generated by the two-dimensional rectangles
$I_{n^1}(x^1)\times I_{n^2}(x^2)\, (x=(x^1, x^2)\in I^2)$.
For $n\in \mathbb P$ denote by $|n|:= \max (j\n : n_j\not= 0)$, that is,
$2^{|n|} \le n < 2^{|n|+1}$.
The Rademacher functions on $I$ are defined as:
\[
r_n(x) := (-1)^{x_n} \quad (x\in I, \, n\n).
\]
The Walsh-Paley system (on $I$) is defined as the sequence of the Walsh-Paley
functions:
\[
\omega_n(x) := \prod_{k=0}^{\infty}(r_k(x))^{n_k} =
(-1)^{\sum_{k=0}^{|n|}n_kx_k}, \quad (x\in I, \, n\n).
\]
That is, ${ \omega} := (\omega_n, n\n)$.
(For details see Fine \cite{Fin}.) We also use the notations $n_{(k)} := \sum_{j=0}^k n_j2^j, n^{(k)} := \sum_{j=k}^{\infty} n_j2^j$.

Consider the Dirichlet and the Fej\'er  kernel functions:
\[
\begin{split}
& D_n:= \sum_{k=0}^{n-1}\omega_k, \\
&  K_n:= \frac{1}{n}\sum_{k=0}^{n-1} D_k,\\
& D_0, K_0:= 0.
\end{split}
\]
The Fourier coefficients, the $n$-th partial sum of the Fourier series, the
$n$-th $(C,1)$ mean of $f\in L^1(I)$:
\[
\begin{split}
& \hat f(n):= \int_I f(x)\omega_n(x)\, dx \, \, (n\n),
\\
& S_nf(y) := \sum_{k=0}^{n-1}\hat f(k)\omega_k(y) =
\int_I f(x+y)D_n(x)\, dx =: f*D_n(y),\\
& \sigma_nf(y) := \frac{1}{n}\sum_{k=0}^{n-1} S_kf(y) = \int_I f(x+y) K_n(x) \, dx =: f*K_n(y),
\, (n\p, y\in I).
\end{split}
\]
Moreover, for $n\n$ we have (\cite[page 7]{sws})
\[
D_{2^n}(x) =\begin{cases} 2^n , \text{ if } x\in I_n,\\ 0 , \mbox{ otherwise }
\end{cases}
\]
and for $n\p$ (\cite[page 28]{sws})
\begin{equation}\label{dirichletkernel}
D_{n}(x) =\omega_n(x) \sum_{i=0}^{|n|}n_ir_i(x)D_{2^i}(x).
\end{equation}

Then, this gives $S_{2^n}f(y) = 2^n\int_{I_n(y)}f(x) dx = E_nf(x)\, (n\n)$.
We say that an operator $T : L^1(I^j) \to L^0(I^j)$ ($L^0(I^j)$ is the space of
measurable functions on  $I^j$) is of type $(L^p,L^p)$ (for
$1\le p \le \infty$) if
$\|Tf\|_p \le C_p\|f\|_p$ with some constant $C_p$ depending
only on $p$ for all $f\in L^p(I^j)$. We say that $T$ is of weak type $(L^1,L^1)$ if
$\mes\{|Tf|>\lambda\} \le C\|f\|_1/\lambda$ for all $f\in L^1(I^j)$ and
$\lambda >0$ ($j=1,2$).
The two-dimensional Walsh-Paley functions, Dirichlet, Fej\'er and Marcinkiewicz kernels are defined as follows:
\[
\begin{split}
&\omega_n(x) := \omega_{n^1}(x^1)\omega_{n^2}(x^2), \quad
D_{n}(x) := D_{n^1}(x^1)D_{n^2}(x^2), \\
& K_{n}(x) := K_{n^1}(x^1)K_{n^2}(x^2), \quad
M_n(x) := \frac{1}{n}\sum_{k=0}^{n-1} D_{k,k}(x).
\end{split}
\]
Moreover, the two-dimensional Fourier coefficients, the $n$-th ($n\Ne^2$) rectangular partial sum of the Fourier series, the $n$-th ($n\p^2$) $(C,1)$ mean and the $n$-th ($n\p$) Marcinkiewicz mean of $f\in L^1(I^2)$:
\[
\begin{split}
& \hat f(n):= \int_{I^2} f(x)\omega_n(x)\, dx \, \, (n\n^2),
\\
& S_nf(y) := \sum_{k^1=0}^{n^1-1}\sum_{k^2=0}^{n^2-1}\hat f(k^1, k^2)\omega_{(k^1, k^2)}(y) =
\int_{I^2} f(x+y)D_n(x)\, dx ,\\
& \sigma_nf(y) := \frac{1}{n^1 n^2} \sum_{k^1=0}^{n^1-1}\sum_{k^2=0}^{n^2-1}S_kf(y) = \int_{I^2} f(x+y) K_n(x) \, dx
\, (n\p^2, y\in I^2),\\
& t_nf(y) := \frac{1}{n}\sum_{k=0}^{n-1}S_{k,k}f(y) = \int_{I^2} f(x+y) M_n(x) \, dx \, (n\p, y\in I^2).
\end{split}
\]

Many papers investigate the behavior of the convergence (and some the divergence)  properties of the two dimensional Fej\'er means with respect to the trigonometric or the Walsh system. We mention the papers \cite{JMZ}, \cite{gat_trig} (trigonometric) and \cite{msw}, \cite{gatproc} (Walsh-Paley system).
This is another story and also very interesting to discuss the almost everywhere
convergence of the Marcinkiewicz means
$\frac{1}{n}\sum_{j=0}^{n-1}S_{j,j}f$ of integrable functions
with respect to orthonormal systems.
Although, this mean is
defined for two-variable functions, in the view of almost
everywhere convergence there are similarities with the
one-dimensional case. On the one side, the maximal convergence space for two dimensional Fej\'er means (no restriction on the set of indices other than they have to converge to $+\infty$) is $L\log^+ L$ (\cite{gat_trig, gatproc}), and on the other side, for the Marcinkiewicz means we have a.e. convergence for every integrable functions (for the trigonometric, Walsh Paley systems).

We mention that the first result is due to Marcinkiewicz \cite{Mar}. But he proved ``only'' for functions in the space $L\log^+L$ the a.e. relation $t_nf\to f$ with respect to the trigonometric system. The ``$L^1$ result''
for the trigonometric and the Walsh-Paley system see the papers of Zhizhiasvili \cite{zhi} (trigonometric system), Weisz \cite{wmar} (Walsh system) and Goginava \cite{gog2, gog} (Walsh system).
Some of these results  (including the proofs)
can also be found in \cite{w5}.

The triangular partial sums and the triangular Dirichlet kernels of the $2$-dimensional Fourier series are defined as
\[
\begin{split}
& S_{k}^{\bigtriangleup}f(x^1,x^2):=\sum_{i=0}^{k-1}\sum_{j=0}^{k-i-1}\hat{f}
(i,j) \omega_{i}(x^1) \omega_{j}(x^2),\quad
 D_{k}^{\bigtriangleup }(x^1,x^2)
:=\sum_{i=0}^{k-1}\sum_{j=0}^{k-i-1}\omega_{i}(x^1)\omega_{j}(x^2).
\end{split}
\]
The Fej\'er means of the  triangular partial sums of the two-dimensional integrable function $f$ (see e.g. \cite{gw}) are
\[
\sigma^{\bigtriangleup}_n f := \frac{1}{n}\sum_{k=0}^{n-1}S_{k}^{\bigtriangleup}f.
\]

For the trigonometric system Herriot proved \cite{her} the a.e. (and norm) convergence $\sigma^{\bigtriangleup}_n f \to f $  ($f\in L^1$).
His method can not be adopted for the Walsh  system, since for the time being there is no kernel formula
available for these systems. The first result in this a.e. convergence issue of triangular means is due to Goginava and Weisz \cite{gw}.
They proved for the Walsh-Paley system and each integrable function the a.e. convergence relation
$\sigma^{\bigtriangleup}_{2^n} f \to f$. That is, we have the subsequence $(\sigma^{\bigtriangleup}_{2^n})$ of the whole sequence of the triangular mean operators.
This result   for every lacunary sequence $(a_n)$ (that is, $a_{n+1}\ge qa_n, \, q>1$) (instead of $(2^n)$) follows from a result of G\'at \cite{gatMlike}.
The aim of this paper is to extend this result of the author for the whole sequence of natural numbers. That is, the almost everywhere convergence $\sigma^{\bigtriangleup}_{n}f\to f$ for every integrable function $f$.

 To demonstrate an important relation between the triangle kernels and the one dimensional Dirichlet kernels  see  some calculations below.
\[
\begin{split}
& K_{n}^{\bigtriangleup }(x^1,x^2)
= \frac{1}{n}\sum_{k=0}^{n-1}D_{k}^{\bigtriangleup }(x^1,x^2)
 =\frac{1}{n}\sum_{k=1}^{n-1}\sum_{i=0}^{k-1}
\sum_{j=0}^{k-i-1}\omega_{i}(x^1) \omega_{j}(x^2)\\
& =\frac{1}{n}\sum_{k=1}^{n-1}\sum_{i=0}^{k-1}\omega_{i}(x^1) D_{k-i}(x^2)
  =\frac{1}{n}\sum_{k=1}^{n-1}\sum_{i=1}^{k}\omega_{k-i}(x^1) D_{i}(x^2)
 =\frac{1}{n}\sum_{i=1}^{n-1}\sum_{k=i}^{n-1}\omega_{k-i}(x^1) D_{i}(x^2)\\
& =\frac{1}{n}\sum_{i=1}^{n-1}D_{n-i}(x^1) D_{i}(x^2)
 =\frac{1}{n}\sum_{i=1}^{n-1}D_{i}(x^1) D_{n-i}(x^2).
\end{split}
\]

In other words, 

\[
\sigma_n^{\bigtriangleup}f(y) := \frac{1}{n}\sum_{k=0}^{n-1}S_{k}^{\bigtriangleup}f(y) 
 = \int_{I^2} f(x+y)K_{n}^{\bigtriangleup}(x) dx = \frac{1}{n}\sum_{k=0}^{n-1}S_{k,n-k}f(y).
\]

That is, the main aim of this paper is to prove the a.e. convergence
\[
\sigma_n^{\bigtriangleup}f = \frac{1}{n}\sum_{k=0}^{n-1}S_{k,n-k}f \to f
\]
for each integrable two-variable function $f$.

In  paper \cite{gat_gog} we introduced the notion of dyadic triangular-Fej\'er means of
two-dimensional Walsh-Fourier series as follows:
\[
\dot{\sigma}_{n}^{\bigtriangleup }f:=\frac{1}{n}
\sum_{k=0}^{n-1}S_{k,n\oplus k}f,
\]
where $\oplus $ is the dyadic (or logical) addition. That is,
\[
k\oplus n := \sum_{i=0}^{\infty} |k_i-n_i|2^i,
\]
where $k_i, n_i$ are the $i$th coordinate of natural numbers $k,n$ with respect to number system based $2$. Remark that the inverse operation 
of $\oplus$ is also $\oplus$.
In paper \cite[Corollary 1]{gat_gog}  we proved 
for each $f\in L^1$ the a.e. relation
\[
\dot{\sigma}_{n}^{\bigtriangleup }f \to f.
\]

The dyadic (or logical) addition is completely different from the ordinary (or arithmetical) one.
Besides, it seems that the ``arithmetical'' version (that is, the $(C,1)$ means of $S_{k, n-k}f$) is a more difficult situation and maybe that is why, there appeared some partial results earlier. See for instance the result of Goginava and Weisz \cite{gw}: $\frac{1}{2^n}\sum\limits_{i=0}^{2^n-1}S_{i,2^n-i}f\to f$ a.e. for every $f\in L^1$.
The ``arithmetical'' triangular means are a natural analogue of the triangular means with respect to the trigonometric system.

However,  the ``dyadic'' (or ``logical'') triangular means defined as
$\frac{1}{n}\sum\limits_{i=0}^{n-1}S_{i,n\oplus i}f$ (\cite{gat_gog}) are also natural analogue of the triangular means with respect to the trigonometric system but they are different.   None of the two results imply the other and the proofs need different methods.

The main result of this paper is:

\begin{thm}\label{maintheorem}
Let $f\in L^1(I^2)$. Then $\sigma^{\bigtriangleup}_{n}f = \frac{1}{n}\sum_{k=0}^{n-1}S_{k,n-k}f\to f$ almost everywhere as $n\to\infty$.
\end{thm}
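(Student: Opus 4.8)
The plan is to follow the standard route for almost everywhere convergence results: establish that the maximal operator $\sigma^{\bigtriangleup}_* f := \sup_n |\sigma^{\bigtriangleup}_n f|$ is of weak type $(L^1, L^1)$, and then conclude by the usual density argument, since a.e. convergence holds trivially on the dense class of Walsh polynomials (where the triangular Fej\'er means eventually stabilise). Thus the whole problem reduces to the weak-type estimate $\mes\{\sigma^{\bigtriangleup}_* f > \lambda\} \le C\|f\|_1/\lambda$. Using the kernel formula derived in the excerpt,
\[
K_n^{\bigtriangleup}(x^1,x^2) = \frac{1}{n}\sum_{i=1}^{n-1} D_i(x^1) D_{n-i}(x^2),
\]
the task is to bound $\int_{I^2} \sup_n \bigl| \frac{1}{n}\sum_{i=1}^{n-1} D_i(x^1) D_{n-i}(x^2 ) \bigr|$-type integrals of $K_n^{\bigtriangleup}$ near the appropriate points, i.e.\ to show the kernels satisfy a quasi-local estimate.

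The key technical step is a careful decomposition of the kernel $K_n^{\bigtriangleup}$. First I would reduce to $n = 2^A$-type scales by writing a general $n$ in terms of its binary digits and splitting the sum $\sum_{i=1}^{n-1}$ according to the dyadic blocks of $i$; the partial "staircase" structure of $\{1,\dots,n-1\}$ decomposed into maximal dyadic intervals is what makes this tractable. On each such block one can use the product structure $D_i(x^1)D_{n-i}(x^2)$ together with the Dirichlet-kernel identity \eqref{dirichletkernel}, $D_m(x) = \omega_m(x)\sum_{j=0}^{|m|} m_j r_j(x) D_{2^j}(x)$, to express $D_i$ and $D_{n-i}$ in terms of the localized kernels $D_{2^j}$, which are supported on dyadic intervals $I_j$ and equal $2^j$ there. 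Substituting and reorganizing the double sum, one obtains $K_n^{\bigtriangleup}$ as a combination of terms of the form (character) $\times$ $D_{2^j}(x^1) D_{2^\ell}(x^2)$ $\times$ (one-dimensional Dirichlet or Fej\'er kernels in the complementary variables), and the number of distinct types of such terms is controlled polynomially in $\log n$. The estimates for the one-dimensional pieces can then be borrowed from the known weak-type theory of the one-dimensional Walsh--Fej\'er and Dirichlet-kernel maximal operators (Schipp, P\'al-Simon, Fujii), as recorded in \cite{sws}.

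The main obstacle — and the heart of the paper — is precisely that, unlike the trigonometric case (Herriot \cite{her}), there is no closed formula for $K_n^{\bigtriangleup}$, so the block decomposition must be done by hand and one must extract, from the messy double sum over $i$, a bound that is uniform in $n$ after division by $n$. In particular, the "diagonal" contributions where $i$ and $n-i$ have comparable size, and the interaction between the carries of $i$ and $n-i = n - i$ in ordinary (not dyadic) subtraction, are delicate: ordinary subtraction does not respect the dyadic block structure the way $\oplus$ does in \cite{gat_gog}, which is exactly why the present result does not follow from the dyadic-triangular case. I expect the proof to proceed by isolating a "good" part of the kernel that behaves like a tensor product of one-dimensional Fej\'er kernels (handled by the Marcinkiewicz-type and Fej\'er maximal operator theory) plus an "error" part whose $L^1$-norm over the relevant dyadic neighborhoods is summably small in the scale parameter, so that the supremum over $n$ can be majorized by a bounded multiple of the Hardy--Littlewood maximal function plus a weak-$(1,1)$ remainder. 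Verifying that the error part is genuinely negligible — controlling it independently of $n$ — is where the bulk of the work will lie.
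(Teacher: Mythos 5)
Your global strategy coincides with the paper's: reduce everything to an estimate for the maximal kernel, deduce quasi-locality of $\sigma^{\bigtriangleup}_*$, combine it with the $(L^\infty,L^\infty)$ bound to get weak type $(L^1,L^1)$, and finish by density on Walsh polynomials. That part is fine. The genuine gap is in the only place where the real work happens: the bound $\int_{I^2\setminus(I_a\times I_a)}\sup_{n\ge 2^a}|K_n^{\bigtriangleup}|\le C$ (Lemma \ref{sup_kernel_int}). Your plan is to expand $D_i(x^1)D_{n-i}(x^2)$ by \eqref{dirichletkernel}, reorganize into tensor-type pieces $D_{2^j}(x^1)D_{2^\ell}(x^2)$ times one-dimensional Dirichlet/Fej\'er kernels, and then borrow one-dimensional maximal estimates, treating what is left as an ``error part whose $L^1$-norm is summably small.'' But no mechanism is proposed that actually produces this smallness, and absolute-value/size estimates cannot produce it: after the expansion one faces, on $J_{t^1}\times J_{t^2}$, sums of the form $\sum_{k=0}^{2^s-1}D_k(x^1)D_{m+k}(x^2)$ with $s$ up to $|n|$, and the trivial bounds are not summable over the scales $s$. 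Worse, the ``good part behaving like a tensor product of one-dimensional Fej\'er kernels'' cannot be handled by weak-$(1,1)$ one-dimensional theory plus Hardy--Littlewood: the unrestricted two-parameter Fej\'er maximal operator is \emph{not} of weak type $(L^1,L^1)$ (its a.e.\ convergence class is $L\log^+L$, as recalled in the introduction), so a reduction to tensor-product Fej\'er bounds can never reach $L^1$. The triangular structure must supply genuine cancellation in the sum over $k$, coming precisely from the interaction of the arithmetic carries of $n+k$ with the Walsh characters -- the difficulty you correctly name but do not resolve.

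The paper's resolution is a concrete quantitative cancellation estimate that your sketch lacks: Lemma \ref{delta1} shows
\[
\int_{I^2}\sup_{n\le 2^A}\Bigl|\frac{1}{2^A}\sum_{k=0}^{2^A-1}\omega_k(x^1)\,\omega_{k+n}(x^2)\Bigr|\,dx\le C\delta^A,\qquad 0<\delta<1,
\]
proved by dominating the supremum by an $L^4$ norm and counting, via a digit-block argument, the quadruples $(n,k,l,i)$ for which the carry identity $(k+n)\oplus(l+n)\oplus(i+n)=(k\oplus l\oplus i)+n$ can hold. This exponential gain is then localized (Corollary \ref{corF+F}) and fed into the bilinear Dirichlet estimate of Lemma \ref{marc},
\[
\int_{J_{t^1}\times J_{t^2}}\sup_{n<2^s}\Bigl|\sum_{k=0}^{2^s-1}D_k(x^1)D_{n+k}(x^2)\Bigr|dx\le C(t^2-t^1+1)^3\frac{2^{t^1}}{2^{t^2}}2^s\delta^{s-t^2},
\]
whose factor $\delta^{s-t^2}$ is exactly what makes the subsequent summation over $s$, $t^1$, $t^2$, $A$ converge in Lemmas \ref{sup_kernel_int_t1}--\ref{sup_kernel_int_t3}. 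Without an ingredient of this type (or an equivalent source of decay in the scale parameter), your decomposition stalls at bounds that grow with the number of scales, and the weak-$(1,1)$ estimate -- hence the theorem -- does not follow.
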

The main tool in the proof of Theorem \ref{maintheorem} is the following lemma with respect to the maximal triangle Fej\'er kernel.
 By the help of this lemma we will verify  that the maximal operator $\sigma_*^{\bigtriangleup}$ ($\sigma_*^{\bigtriangleup}f = \sup_n |\sigma_n^{\bigtriangleup}f|$) is quasi-local (for the definition of quasi-locality see e.g. \cite[page 262]{sws}) and consequently it is of weak type $(L^1, L^1)$ and then by the standard density argument Theorem \ref{maintheorem} will be implied.

\begin{lem}\label{sup_kernel_int} For $a\in \mathbb{N}$
\[
\int_{I^2\setminus (I_a\times I_a)}\sup_{n\ge 2^a}|K_n^{\bigtriangleup}(x)| dx \le C.
\]
\end{lem}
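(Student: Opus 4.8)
The plan is to exploit the identity
\[
K_n^{\bigtriangleup}(x^1,x^2)=\frac1n\sum_{i=1}^{n-1}D_i(x^1)D_{n-i}(x^2)
\]
obtained above, together with \eqref{dirichletkernel} and $D_{2^s}=2^s\mathbf 1_{I_s}$. First, the substitution $i\mapsto n-i$ gives the symmetry $K_n^{\bigtriangleup}(x^1,x^2)=K_n^{\bigtriangleup}(x^2,x^1)$, and $I^2\setminus(I_a\times I_a)\subset((I\setminus I_a)\times I)\cup(I\times(I\setminus I_a))$, so it suffices to estimate $\int_{(I\setminus I_a)\times I}\sup_{n\ge 2^a}|K_n^{\bigtriangleup}|$. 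I would then split $I\setminus I_a=\bigcup_{p=0}^{a-1}(I_p\setminus I_{p+1})$ and, up to a null set, $I=\bigcup_{q\ge 0}(I_q\setminus I_{q+1})$, reducing matters to a bound on the integral over each cell $(I_p\setminus I_{p+1})\times(I_q\setminus I_{q+1})$ ($0\le p<a$, $q\ge 0$) that is summable over all pairs $(p,q)$ to a constant independent of $a$.

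On such a cell \eqref{dirichletkernel} degenerates: $D_i(x^1)=\omega_i(x^1)\psi_p(i)$, where $\psi_p(i):=\sum_{s<p}i_s2^s-i_p2^p$ is a function of the $p+1$ least significant binary digits of $i$ with $|\psi_p(i)|\le 2^p$, and likewise $D_{n-i}(x^2)=\omega_{n-i}(x^2)\psi_q(n-i)$. Writing $i=b+c\,2^{P+1}$ with $P:=\max(p,q)$ and $0\le b<2^{P+1}$, one finds that $\psi_p(i)$, $\psi_q(n-i)$ and the ``low'' Walsh factors depend, for fixed $n$, only on $b$, while the ``high'' part is a partial anti-diagonal sum $\sum_{c}\omega_c\omega_{\gamma-c}$ in the tail variables $(x^1_{P+1},x^1_{P+2},\dots)$, $(x^2_{P+1},\dots)$, with $\gamma\approx n/2^{P+1}$; since $D^{\bigtriangleup}_{L+1}-D^{\bigtriangleup}_{L}=\sum_{c=0}^{L}\omega_c(x^1)\omega_{L-c}(x^2)$, this high part is a difference of triangular Dirichlet kernels at the reduced scale $|n|-(P+1)$. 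This suggests closing the estimate by induction on $|n|$, feeding in at each step the one-dimensional facts $\|K_m\|_1\le C$, $\|D_{2^s}\|_1=1$, the quasi-locality of the one-dimensional Walsh--Fej\'er and Walsh--Dirichlet maximal kernels (cf.\ \cite{sws}), and arguments of the kind used for the dyadic triangular kernel in \cite{gat_gog} and for the $2^n$-subsequence in \cite{gw}.

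The main obstacle is the cancellation inside the $i$-sum. Since $K_n^{\bigtriangleup}$ is a sum along the anti-diagonals $i+j=n$ and does \emph{not} split as a product of one-variable kernels, the cell integrals are finite only because of cancellation, not size: the trivial estimates $|\psi_p(i)\psi_q(n-i)|\le 2^{p+q}$ and $\#\{1\le i<n\}\le n$ merely give $O(1)$ per cell, which is not summable over $q$. The gain must come from pairing indices $i\leftrightarrow i\oplus 2^s$ — which flips the sign of $\omega_i(x^1)$ on $I_s\setminus I_{s+1}$ — together with the fact that sums of consecutive Dirichlet kernels are Fej\'er kernels of bounded $L^1$-norm. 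The delicate point is that $\sup_{n\ge 2^a}$ stands \emph{inside} the integral, so the entire decomposition — over the scales $s\le p$, over the binary digits of $n$, and over the ``carry'' blocks $c$ — has to be arranged so that all the resulting estimates are summable \emph{simultaneously} and collapse to a geometric series in $p$, hence to an $a$-independent constant; the hypothesis $n\ge 2^a$ is essential here, since it forces the relevant reduced indices to be large. Once the lemma is established, the quasi-locality of $\sigma_*^{\bigtriangleup}$, its weak type $(L^1,L^1)$, and Theorem \ref{maintheorem} follow by the standard density argument, as indicated above.
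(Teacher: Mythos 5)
Your setup --- the identity $K_n^{\bigtriangleup}(x)=\frac1n\sum_{i=1}^{n-1}D_i(x^1)D_{n-i}(x^2)$, the symmetry, the decomposition of $I^2\setminus(I_a\times I_a)$ into cells $J_p\times J_q$, and the use of \eqref{dirichletkernel} on each cell with a split of $i$ into low and high binary digits --- matches the architecture of the paper's proof (the cells $J_{t^1}\times J_{t^2}$ and the three regimes treated in Lemmas \ref{sup_kernel_int_t1}--\ref{sup_kernel_int_t3}). But the proof does not close, because the decisive quantitative ingredient is missing: a bound with \emph{exponential decay}, uniform in the shift, for the maximal anti-diagonal Walsh sums. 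In the paper this is Lemma \ref{delta1},
\[
\int_{I^2}\sup_{n\le 2^A}\Bigl|\tfrac{1}{2^A}\sum_{k=0}^{2^A-1}\omega_k(x^1)\omega_{k+n}(x^2)\Bigr|\,dx\le C\delta^A,\qquad 0<\delta<1,
\]
proved by an $L^4$ (fourth-moment) expansion and a combinatorial count, blockwise in groups of four binary digits, of the quadruples $(n,k,l,i)$ compatible with the carry identity $(k+n)\oplus(l+n)\oplus(i+n)=(k\oplus l\oplus i)+n$; this feeds into Lemma \ref{marc}, which supplies the factor $\delta^{s-t^2}$ that makes the high-scale contribution ($s>t^2$, Lemma \ref{sup_kernel_int_t3}) summable. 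Nothing in your proposal plays this role. Your ``induction on $|n|$'' reduces the high-digit block to a maximal shifted anti-diagonal sum at a smaller scale, but that object is exactly what one must bound; it carries no a priori smallness (pointwise it saturates the trivial bound, and the triangular partial sums themselves diverge by Harris's theorem), so the induction has no contraction driving it. The one-dimensional inputs you invoke ($\|K_m\|_1\le C$, $D_{2^s}=2^s\mathbf 1_{I_s}$, quasi-locality) control only the low-digit contributions, which correspond to the paper's Lemmas \ref{sup_kernel_int_t1} and \ref{sup_kernel_int_t2} (via Yano's estimate for $K_{2^s}$ on $J_{t^1}$ and Memi\'c's bound \eqref{supKnMemi}).

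You in fact name the obstacle yourself (``the gain must come from pairing $i\leftrightarrow i\oplus 2^s$ \dots\ arranged so that all the resulting estimates are summable simultaneously''), but you do not say how, and the pairing idea alone is not enough: the sign flip of $\omega_i(x^1)$ on $J_p$ produces cancellation only when the companion factor $D_{n-i}(x^2)$ is essentially unchanged under $i\mapsto i\oplus 2^s$, and whether it changes is governed by the carries in $n-i$ --- precisely the combinatorics that the fourth-moment count in Lemma \ref{delta1} quantifies (and which reappears, in deterministic form, in the carry analyses of Cases B--CD of Lemma \ref{marc}). There is also the bookkeeping point you flag but do not resolve: an $O(1)$ bound per cell, or even $O(2^{p-q})$ for $q\ge p$, yields $\sum_{p<a}O(1)=O(a)$ after summation, not a constant; the $a$-independence in the paper comes from the constraint $A=|n|\ge a$ entering as factors $2^{t^1-A}$ and $\delta^{A-t^2}$, which again presupposes the exponential gain. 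So the proposal is a reasonable skeleton, but without an analogue of Lemma \ref{delta1} the estimate does not close, and this is a genuine gap rather than a routine detail.
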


\section{more lemmas and proofs}

To prove Lemma \ref{sup_kernel_int} we need a sequence of lemmas. The first one is:
\begin{lem}\label{delta1}
There exists a $0<\delta<1$ such that
\[
\int_{I^2} \sup_{n\le 2^A}\left|\frac{1}{2^A}\sum_{k=0}^{2^A-1}\omega_k(x^1)\omega_{k+n}(x^2)\right| dx \le C\delta^A
\]
for every $A\in\mathbb{N}$.
\end{lem}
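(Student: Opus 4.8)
The plan is to estimate the sum $\sum_{k=0}^{2^A-1}\omega_k(x^1)\omega_{k+n}(x^2)$ by exploiting the structure of Walsh functions under the arithmetic shift $k\mapsto k+n$. The key observation is that for most values of $k$ in a dyadic block the addition $k+n$ does not produce a carry past a certain bit, so on such ranges $\omega_{k+n}$ factors as $\omega_k \cdot(\text{something independent of the low bits of }k)$ after a dyadic translation of the argument. Concretely, I would split the range $\{0,\dots,2^A-1\}$ according to the position of the lowest zero-bit of $n$ relative to $k$, or equivalently group the indices $k$ into dyadic intervals of length $2^m$ on which $k+n$ is ``affine'' in $k$ in the Walsh sense. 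On a block where no carry escapes, $\sum_{k\in\text{block}}\omega_k(x^1)\omega_{k+n}(x^2)$ becomes (up to a unimodular factor) a product $\omega_{c}(x^2)\sum_{j=0}^{2^m-1}\omega_j(x^1)\omega_j(x^2) = \omega_c(x^2) 2^m \mathbf{1}_{I_m}(x^1+x^2)$, using the fundamental identity $\sum_{j=0}^{2^m-1}\omega_j(t)=D_{2^m}(t)=2^m\mathbf 1_{I_m}(t)$ together with $\omega_j(x^1)\omega_j(x^2)=\omega_j(x^1+x^2)$.

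Carrying this out, the supremum over $n\le 2^A$ of the normalized sum is controlled, away from a small exceptional set, by a geometric-type sum in which the block length $2^m$ is weighted against the measure $2^{-m}$ of the set $\{x^1+x^2\in I_m\}$ where the Dirichlet kernel $D_{2^m}$ does not vanish. The $L^1$ norm of each such term is $O(2^m\cdot 2^{-m})=O(1)$, but there is an extra gain: the number of distinct bit-positions that can occur is at most $A$, and the contributions at scale $m$ come with an honest geometric factor because a carry \emph{does} occur for a fixed proportion of the indices, forcing cancellation or confinement to ever-smaller sets. I would organize the bound so that the total is $\sum_{m} (\text{const})\cdot \rho^{A-m}$ for some fixed $\rho<1$ reflecting the probability that the relevant carry pattern persists, which sums to $C\delta^A$ with $\delta=\rho<1$.

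The technical heart is handling the carries in $k+n$ uniformly in $n$: unlike the dyadic addition $k\oplus n$ treated in \cite{gat_gog}, arithmetic addition mixes all the higher bits once a carry starts, so $\omega_{k+n}(x^2)$ is not simply $\omega_k(x^2)\omega_n(x^2)$ times a translation. I would deal with this by conditioning on the length of the carry chain: write $n = n_{(s-1)} + n^{(s)}$ where bit $s$ is chosen so that adding the low part $k_{(s-1)}$ of $k$ to $n_{(s-1)}$ either does or does not overflow into bit $s$, and iterate. Each time the carry chain has length $\ell$, the corresponding set of $(x^1,x^2)$ on which the partial sum is large is contained in a dyadic set of measure $\le C\ell 2^{-\ell}$ (or similar), and summing over $\ell$ and over the at most $A$ starting positions yields the exponential decay. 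The main obstacle I anticipate is precisely making the carry analysis give a genuine \emph{geometric} (not merely polynomial) decay in $A$ while keeping the estimate uniform over all $n\le 2^A$ inside the supremum; this likely requires a careful double induction on $A$ and on the scale, rather than a one-shot splitting.

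Finally, I would record that the exceptional sets across all scales have total measure bounded by a convergent geometric series, so after removing a set of measure $O(\delta^A)$ the supremum is pointwise $O(1)$ on a set of small measure and genuinely small off it; integrating gives the claimed $C\delta^A$ bound. The constants $C$ and $\delta\in(0,1)$ are absolute, which is what the later lemmas (and ultimately Lemma \ref{sup_kernel_int}) require.
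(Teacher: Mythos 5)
There is a genuine gap, in fact two. First, the factorization at the heart of your plan is false as stated: even on a dyadic block of $k$'s from which no carry escapes, writing $k=q2^m+j$ with $j<2^m$, the low part of $k+n$ is $j+n_{(m-1)} \pmod{2^m}$, and this is \emph{arithmetic}, not dyadic, addition; in general $\omega_{j+c}\neq\omega_j\omega_c$, so $\sum_{k\in\text{block}}\omega_k(x^1)\omega_{k+n}(x^2)$ does not reduce (up to a unimodular factor) to $\omega_c(x^2)\sum_j\omega_j(x^1)\omega_j(x^2)=\omega_c(x^2)D_{2^m}(x^1+x^2)$. That identity would require the block of low bits of $k$ to be carry-free against $n_{(m-1)}$ as well, which fails on a full dyadic interval unless the relevant low part of $n$ vanishes --- and handling exactly this interaction of arithmetic carries with the Walsh structure is the whole content of the lemma, so it cannot be assumed away in the first step.

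Second, the decisive point --- exponential decay \emph{uniform in $n$ under the supremum} --- is not established; you acknowledge it (``likely requires a careful double induction'') but never supply a mechanism. Your accounting ``summing over $\ell$ and over the at most $A$ starting positions'' overlooks that the sup ranges over about $2^A$ values of $n$, each producing its own carry pattern and its own concentration set; for a single $n$ one indeed has $\|\tfrac1{2^A}\sum_k\omega_k(x^1)\omega_{k+n}(x^2)\|_1\le 2^{-A/2}$, but a union or sum over all $n$ wipes out any such gain, and nothing in your sketch replaces it. The paper's proof uses a completely different device precisely to beat this union bound: it dominates the $L^1$ norm of the sup by an $L^4$ norm, replaces $\sup_{n}$ by $\bigl(\sum_n\|\cdot\|_4^4\bigr)^{1/4}$, expands the fourth power, and uses Walsh orthogonality to reduce the problem to counting quadruples $(n,k,l,i)$ satisfying $(k+n)\oplus(l+n)\oplus(i+n)=(k\oplus l\oplus i)+n$; exhibiting a single forbidden $4$-bit block pattern shows the count is at most $(2^{16}-1)^{\lfloor A/4\rfloor}2^{12}$, which gives the bound $8\delta^A$ with $\delta=\bigl((2^{16}-1)/2^{16}\bigr)^{1/16}$ (the case $n=2^A$ being handled separately via $\omega_{k+2^A}=\omega_k\omega_{2^A}$). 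Unless you can make the carry/exceptional-set analysis quantitative and uniform over all $2^A$ shifts $n$ --- something your proposal does not do --- the argument does not close, so as written it does not prove the lemma.
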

\begin{proof}
Recall that we
use the notation $n\oplus k := \sum_{j=0}^{\infty}|n_j-k_j|2^j$ ($n,k\n$). That is, the dyadic addition of natural numbers.
First, we discuss the case $n=2^A$ and then $n<2^A$ will be supposed everywhere. That is, let $n=2^A$ now for a moment.
Since $k<2^A$, then $\omega_{k+2^A} = \omega_k \omega_{2^A}$ and
\[
\begin{split}
& \int_{I^2}\left|\frac{1}{2^A}\sum_{k=0}^{2^A-1}\omega_k(x^1)\omega_{k+2^A}(x^2)\right| dx
 = \int_{I^2}\left|\frac{1}{2^A}\sum_{k=0}^{2^A-1}\omega_k(x^1)\omega_{k}(x^2)\right| dx \\
& = \int_{I^2}\frac{1}{2^A}D_{2^A}(x^1+x^2) dx
 = \frac{1}{2^A}.
\end{split}
\]
That is, case $n=2^A$ is cleared and in the sequel $n<2^A$ is supposed.
The integral to be investigated is not greater than the $L^4(I^2)$ norm of
\[
\sup_{n<2^A}\left|\frac{1}{2^A}\sum_{k=0}^{2^A-1}\omega_k(x^1)\omega_{k+n}(x^2)\right|.
\]
 This $L^4(I^2)$ norm is bounded by
\begin{equation}\label{delta2}
\begin{split}
  & \frac{1}{2^A}\left(\int_{I^2}\sum_{n=0}^{2^A-1}\left|\sum_{k=0}^{2^A-1}\omega_k(x^1)\omega_{k+n}(x^2)\right|^4\right)^{1/4}\\
  & =   \frac{1}{2^A}\Biggl(\sum_{n=0}^{2^A-1}\sum_{k=0}^{2^A-1}\sum_{l=0}^{2^A-1}\sum_{i=0}^{2^A-1}\sum_{j=0}^{2^A-1}
  \int_I \omega_{k\oplus l\oplus i\oplus j}(x^1) dx^1
   \int_I \omega_{(k+n)\oplus (l+n)\oplus (i+n)\oplus (j+n)}(x^2) dx^2\Biggr)^{1/4} \\
  & =   :\frac{1}{2^A}\Biggl(\sum_{n,k,l,i,j\in\{0,\dots, 2^A-1\}}B_{n,k,l,i,j}\Biggr)^{1/4}.
\end{split}
\end{equation}
Investigate the integral $B_{n,k,l,i,j}$. Suppose that it is not zero. Then $k\oplus l\oplus i\oplus j$ should be zero. Thus, $j=k\oplus l\oplus i$.
Similarly,
$(k+n)\oplus (l+n)\oplus (i+n)\oplus (j+n)=(k+n)\oplus (l+n)\oplus (i+n)\oplus ((k\oplus l\oplus i)+n)$ should be zero again.
This follows
\begin{equation}\label{delta3}
(k+n)\oplus (l+n)\oplus (i+n)=  (k\oplus l\oplus i)+n.
\end{equation}
We give an upper bound for the number of quadruples $(n,k,l,i)$ satisfying (\ref{delta3}).

Represent $n,k,l,i$ as $0,1$ sequences of length $A$. Divide every $k,l,n,i$ $0,1$ sequence into blocks with four coordinates (elements) in each block. That is, the first blocks are:
\[
(n_3,n_2,n_1,n_0), \quad (k_3,k_2,k_1,k_0), \quad (l_3,l_2,l_1,l_0), \quad(i_3,i_2,i_1,i_0).
\]
The $s$th block:
\[
\begin{split}
& \alpha_{n,s}:= (n_{4s-1}, n_{4s-2}, n_{4s-3}, n_{4s-4}), \quad \alpha_{k,s} = (k_{4s-1}, k_{4s-2}, k_{4s-3}, k_{4s-4}), \\
& \alpha_{l,s}=(l_{4s-1}, l_{4s-2}, l_{4s-3}, l_{4s-4}), \quad \alpha_{i,s}=(i_{4s-1}, i_{4s-2}, i_{4s-3}, i_{4s-4}) \quad (s=1,\dots \lfloor A/4\rfloor).
\end{split}
\]
Suppose that there exists an $s\in \{1,\dots \lfloor A/4\rfloor\}$ such that
\begin{equation}\label{delta4}
  \alpha_{n,s} = (0,0,1,0), \quad \alpha_{k,s} = (0,0,0,0), \quad \alpha_{l,s} = (0,0,1,0), \quad \alpha_{i,s} = (0,1,0,0).
\end{equation}

When we add $k$ and $n$, then in the $s$th block of $k+n$ we find $(0,0,1,\epsilon_{k,n,s})$, where $\epsilon_{k,n,s}\in\{0,1\}$ depends on smaller indices coordinates of $k$ and $n$.
Similarly, for $l+n$ and $i+n$ we have for their $s$th block:
\[
\alpha_{k+n,s} = (0,0,1,\epsilon_{k,n,s}), \quad \alpha_{l+n,s} = (0,1,0,\epsilon_{l,n,s}), \quad \alpha_{i+n,s} = (0,1,1,\epsilon_{i,n,s}).
\]
This gives for the $s$th block of $(k+n)\oplus (l+n)\oplus (i+n)$: $(0,0,0,\epsilon_{n,k,l,i,s})$, where $\epsilon_{n,k,l,i,s}\in \{0,1\}$. On the other hand,
the $s$th block $k\oplus l\oplus i$ is: $\alpha_{k\oplus l\oplus i, s} = (0,1,1,0)$. This regarding $\alpha_{n,s} = (0,0,1,0)$ gives that
\[
\alpha_{(k\oplus l\oplus i)+n,s} = (1,0,0, \tilde\epsilon_{k,l,i,n,s}),
\]
where $\tilde\epsilon_{k,l,i,n,s}$ is either $0$ or $1$. That is, the $s$th block of $(k+n)\oplus (l+n)\oplus (i+n)$ and $(k\oplus l\oplus i)+n$ is different.
Consequently, (\ref{delta3}) does not hold and $B_{n,k,l,i,j}=0$ ($j=k\oplus l\oplus i$). The number of quadruples $(n,k,l,i)\in \times_1^4 \{0,\dots, 2^A-1\}$
for which there is no block with (\ref{delta4}) is bounded by $(2^{16}-1)^{\lfloor A/4\rfloor} 2^{12}$ ($2^{12}$ occurs if $A$ is of form $4t+3$ ($t\in \mathbb{N}$)). Since for every quadruple $(n,k,l,i)$ we have only one $j$ for $B_{n,k,l,i,j}$, then we have at (\ref{delta2}):
\begin{equation*}
\begin{split}
&  \frac{1}{2^A}\Biggl(\sum_{n,k,l,i,j\in\{0,\dots, 2^A-1\}}B_{n,k,l,i,j}\Biggr)^{1/4} \\
& \le
  \frac{1}{2^A}\left((2^{16}-1)^{\lfloor A/4\rfloor} 2^{12}\right)^{1/4} \le 8 \left(\left(\frac{2^{16}-1}{2^{16}}\right)^{1/16}\right)^A = 8\delta^A,
\end{split}
\end{equation*}
where $0<\delta = \left(\frac{2^{16}-1}{2^{16}}\right)^{1/16} <1$. The proof of Lemma \ref{delta1} is complete.
\end{proof}

\begin{rem}
It can be achieved a (little) better (smaller) constant for $\delta$ then $\left(\frac{2^{16}-1}{2^{16}}\right)^{1/16}$
since not only quadruple blocks
\[
\begin{pmatrix}
& \alpha_{n,s}\\
& \alpha_{k,s}\\
& \alpha_{l,s}\\
& \alpha_{i,s}
\end{pmatrix}
=
\begin{pmatrix}
& 0, 0, 1, 0\\
& 0, 0, 0, 0\\
& 0, 0, 1, 0\\
& 0, 1, 0, 0
\end{pmatrix}
\]
\end{rem}
should be excluded ($s=1,\dots \lfloor A/4\rfloor$) but some more. In a similar way of thinking
if the far right coordinate of $\alpha_{n,s}, \alpha_{k,s}, \alpha_{l,s}, \alpha_{i,s}$ remains $0$, take numbers
$n,k,l,i\in \{0,1,\dots 7\}$ expressed in the binary system, that is, as $0,1$ sequences of length $3$. Then find the quadruples
among $\times_1^4 \{0,1,\dots 7\}$ for which
\[
[n+k \pmod{8}] \oplus [n+l \pmod{8}] \oplus [n+i \pmod{8}]  \not= (k\oplus l\oplus i) + n  \pmod{8}.
\]
However, in the point of view of the proof of the main theorem it is  unimportant and $\delta = \left(\frac{2^{16}-1}{2^{16}}\right)^{1/16}$ is quite ``enough''.
\begin{cor}\label{corF+F}
Let $t^2<s$ be natural numbers and let $k_0,\dots, k_{t^2}\in\{0,1\}$ be fixed. Moreover, let
\[
F_{t^2, s}(x) := \sup_{n<2^s}\left|\sum_{k_{t^2+1},\dots, k_{s-1}\in\{0,1\}}\omega_{k^{(t^2+1)}}(x^1)\omega_{(n+k)^{(t^2+1)}}(x^2)\right|,
 \]
 $x\in I^2$. Then for every two-dimensional square $I_{t^2+1}(u) := I_{t^2+1}(u^1)\times I_{t^2+1}(u^2)$ we have
\[
2^{2t^2}\int_{I_{t^2+1}(u)}F_{t^2, s}(x) dx \le C2^{s-t^2}\delta^{s-t^2},
\]
where constant $0<\delta <1$ comes from Lemma \ref{delta1}.
\end{cor}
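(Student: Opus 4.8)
\textbf{Proof plan for Corollary \ref{corF+F}.}
The plan is to reduce the estimate for $F_{t^2,s}$ on a small square $I_{t^2+1}(u)$ to the situation of Lemma \ref{delta1} by a change of variables that shifts attention from the high-order coordinates of $x$ to ordinary (low-order) ones. First I would observe that in the sum defining $F_{t^2,s}$ the summation runs over $k_{t^2+1},\dots,k_{s-1}$ while $k_0,\dots,k_{t^2}$ are fixed; the Walsh functions $\omega_{k^{(t^2+1)}}(x^1)$ and $\omega_{(n+k)^{(t^2+1)}}(x^2)$ depend on $x^1,x^2$ only through coordinates of index $\ge t^2+1$ (together with the effect of carries coming from the fixed low bits of $k$ when forming $n+k$). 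Hence on the square $I_{t^2+1}(u)$ the values $x^1_0,\dots,x^1_{t^2},x^2_0,\dots,x^2_{t^2}$ are constant, the integrand only sees the ``tail'' variables, and by translation invariance of Lebesgue measure on $I$ (splitting $x^j = u^j + y^j$ with $y^j\in I_{t^2+1}$) we may replace the square by $I_{t^2+1}\times I_{t^2+1}$.

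Next I would rescale: writing $y^j = 2^{-(t^2+1)}z^j$ with $z^j\in I$ turns coordinates $y^j_{t^2+1+m}$ into $z^j_m$, so the factor $\omega_{k^{(t^2+1)}}(y^1)$ becomes, up to the fixed sign coming from the already-summed-out low bits, a Walsh function $\omega_{\tilde k}(z^1)$ where $\tilde k = \sum_{m\ge 0} k_{t^2+1+m}2^m$ runs over $\{0,\dots,2^{s-t^2-1}-1\}$ as the free bits vary; similarly $\omega_{(n+k)^{(t^2+1)}}(y^2)$ becomes $\omega_{\widetilde{n+k}}(z^2)$. Setting $A := s-t^2-1$ and absorbing the carry from the fixed lower part of $k$ into a shift, $\widetilde{n+k} = \tilde k + m$ for an index $m$ (depending on $n$ and on the carry) with $m < 2^{A+1}$; the supremum over $n<2^s$ becomes a supremum over such shifts $m$. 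Thus
\[
2^{2t^2}\int_{I_{t^2+1}(u)}F_{t^2,s}(x)\,dx = 2^{2t^2}\cdot 2^{-2(t^2+1)}\int_{I^2}\sup_{m}\Bigl|\sum_{\tilde k=0}^{2^A-1}\omega_{\tilde k}(z^1)\,\omega_{\tilde k+m}(z^2)\Bigr|\,dz,
\]
and since the supremum is over $m<2^{A+1}$ rather than $m<2^A$ I would split each such $m$ as $m' + \varepsilon 2^A$ with $m'<2^A$, $\varepsilon\in\{0,1\}$; in the case $\varepsilon=1$ the extra factor $\omega_{2^A}(x^2)$ factors out of the inner sum (since $\tilde k + 2^A = \tilde k \oplus 2^A$ for $\tilde k < 2^A$, after reindexing), so up to a bounded multiplicative constant the integral is controlled by two copies of the quantity estimated in Lemma \ref{delta1} with parameter $A$. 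That lemma gives the bound $C\delta^A = C\delta^{s-t^2-1}$, hence the whole expression is $\le 2^{-2}C\delta^{s-t^2-1} \le C'2^{s-t^2}\delta^{s-t^2}$ after adjusting constants, which is the claimed inequality (the factor $2^{s-t^2}$ on the right-hand side is far larger than needed, so there is ample room).

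The main obstacle I anticipate is the careful bookkeeping of the carries: when passing from $(n+k)^{(t^2+1)}$ to a clean shifted index $\tilde k + m$ one must check that the carry into position $t^2+1$ generated by adding the fixed low bits $k_0,\dots,k_{t^2}$ and the corresponding bits of $n$ is a single bit that depends only on $n$ (and the fixed data), not on the free bits $k_{t^2+1},\dots,k_{s-1}$, so that it can be pulled out of the sum and merged into $m$; and one must similarly verify that the sign discrepancies introduced by the already-fixed high bits of $n$ beyond index $s$ (if any, since $n<2^s$ allows $|n|$ up to $s-1$, so in fact none lie beyond $s$) do not depend on the summation index. Once this is pinned down, everything else is the routine scaling and the reduction to Lemma \ref{delta1} described above; no new idea beyond Lemma \ref{delta1} is required.
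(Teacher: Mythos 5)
Your proposal takes essentially the same route as the paper: reduce to the canonical square $I_{t^2+1}\times I_{t^2+1}$ using the fact that $F_{t^2,s}$ depends only on coordinates of index at least $t^2+1$, pull the carry bit $\delta(n_{(t^2)},k_{(t^2)})\in\{0,1\}$ (which depends only on $n$ and the fixed low bits of $k$, not on the free bits) out as a shift of the index, rescale to Walsh indices of length $A=s-t^2-1$, and invoke Lemma \ref{delta1}. Two small inaccuracies, neither fatal: (i) since the tail of $n$ is $<2^A$ and the carry is at most $1$, the shift satisfies $m\le 2^A$, not merely $m<2^{A+1}$, and your factoring of $\omega_{2^A}$ for $m=m'+\varepsilon 2^A$ with $\varepsilon=1$ is only valid when $m'=0$ (for $m'>0$ and $\tilde k+m'\ge 2^A$ the identity $\omega_{\tilde k+m'+2^A}=\omega_{2^A}\,\omega_{\tilde k+m'}$ fails because of the carry into position $A+1$); fortunately only $m'=0$ actually occurs, and indeed Lemma \ref{delta1} is stated with $\sup_{n\le 2^A}$, so no splitting is needed at all. (ii) Lemma \ref{delta1} bounds the normalized average, so after removing the factor $1/2^A$ the correct bound for your last integral is $C2^A\delta^A$, not $C\delta^A$; this still yields the claimed $C2^{s-t^2}\delta^{s-t^2}$, so the conclusion stands.
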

\begin{proof}
The proof of Corollary \ref{corF+F} is nothing else but a direct application of Lemma \ref{delta1}. Namely, $F_{t^2, s}(x)$ does not depend on
$x^j_0,\dots x^j_{t^2}\, (j=1,2)$, it depends (with respect to $x$) only on $x^j_{t^2+1}, \dots, x^j_{s-1}$ $(j=1,2)$ (and $x_s^2$ in the case of $(n+k)_s=1$). Therefore, instead of $I_{t^2+1}(u) = I_{t^2+1}(u^1)\times I_{t^2+1}(u^2)$ we may write $I_{t^2+1}\times I_{t^2+1}$.
Moreover,
\[
(n+k)^{(t^2+1)} = n^{(t^2+1)}+k^{(t^2+1)}+ \delta(n_{(t^2)}, k_{(t^2)})2^{t^2+1},
\]
where $\delta :\mathbb{N}^2\to \{0,1\}$. This gives
\[
\begin{split}
& F_{t^2, s}(x) = \sup_{n<2^s}\left|\sum_{k_{t^2+1},\dots, k_{s-1}\in\{0,1\}}\omega_{k^{(t^2+1)}}(x^1)\omega_{(n+k)^{(t^2+1)}}(x^2)\right| \\
& \le
\sup_{n<2^s}\left|\sum_{k_{t^2+1},\dots, k_{s-1}\in\{0,1\}}\omega_{k^{(t^2+1)}}(x^1)\omega_{n^{(t^2+1)}+k^{(t^2+1)}}(x^2)\right|\\
& + \sup_{n<2^s}\left|\sum_{k_{t^2+1},\dots, k_{s-1}\in\{0,1\}}
\omega_{k^{(t^2+1)}}(x^1)
\omega_{n^{(t^2+1)}+k^{(t^2+1)} + 2^{t^2+1}}(x^2)\right|\\
& \le
2\sup_{m\le 2^{s-t^2-1}}\left|\sum_{l_0,\dots, l_{s-t^2-2}\in\{0,1\}}
\omega_{l}(y^1)\omega_{m+l}(y^2)\right|,
\end{split}
\]
where $l_0=k_{t^2+1},\dots l_{s-t^2-2}=k_{s-1}$, $y^j_0=x^j_{t^2+1},\dots, y^j_{s-t^2-2}=x^j_{s-1}$ ($j=1,2$). Then apply Lemma \ref{delta1}:
\[
\int_{[0,1)^2}\sup_{m\le 2^{s-t^2-1}}\left|\sum_{l_0,\dots, l_{s-t^2-2}\in\{0,1\}}\omega_{l}(y^1)\omega_{m+k}(y^2)\right| dy \le C2^{s-t^2}\delta^{s-t^2}.
\]
Consequently,
\[
2^{2t^2+2}\int_{I_{t^2+1}(u)}F_{t^2, s}(x) dx  \le \int_{[0,1)^2}\sup_{m\le 2^{s-t^2-1}}\left|\sum_{l_0,\dots, l_{s-t^2-2}\in\{0,1\}}\omega_{l}(y^1)
\omega_{m+k}(y^2)\right| dy \le C2^{s-t^2}\delta^{s-t^2}.
\]
This completes the proof of Corollary \ref{corF+F}.
\end{proof}
We use the notation $J_k = I_k\setminus I_{k+1}\, (k\in\mathbb{N})$.

\begin{lem}\label{marc} Let $0\le t^1\le t^2 < s$ be integers, $J_t = J_{t^1}\times J_{t^2}$. Then
\[
\int_{J_t}\sup_{n<2^s}\left|\sum_{k=0}^{2^s-1}D_k(x^1) D_{n+k}(x^2)\right| dx \le C(t^2-t^1+1)^3\frac{2^{t^1}}{2^{t^2}}2^s \delta^{s-t^2},
\]
where $0 <\delta <1$ comes from Lemma \ref{delta1}.
\end{lem}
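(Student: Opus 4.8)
The expression $\sum_{k=0}^{2^s-1}D_k(x^1)D_{n+k}(x^2)$ to be controlled on $J_t = J_{t^1}\times J_{t^2}$ is, up to the factor $\frac1n$, exactly the building block of the triangle Fej\'er kernel, so the goal is to reduce it to the exponential-sum estimate of Lemma~\ref{delta1} (in the packaged form of Corollary~\ref{corF+F}). The starting point is the closed formula \eqref{dirichletkernel}: for $k\ge 1$ one has $D_k(x^1) = \omega_k(x^1)\sum_{i=0}^{|k|}k_i r_i(x^1) D_{2^i}(x^1)$, and similarly for $D_{n+k}(x^2)$. On $J_{t^1}\times J_{t^2}$ the factors $D_{2^i}(x^1)$ and $D_{2^j}(x^2)$ vanish unless $i\le t^1$ and $j\le t^2$ respectively (since $x^1\notin I_{t^1+1}$ forces $x^1_{t^1}$-type cancellation, and $D_{2^i}(x^1)=2^i\mathbf 1_{I_i}(x^1)$). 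Hence on $J_t$ the double sum over $i,j$ collapses to $i\in\{0,\dots,t^1\}$ and $j\in\{0,\dots,t^2\}$, each term carrying a factor $D_{2^i}(x^1)D_{2^j}(x^2)=2^{i+j}\mathbf 1_{I_i\times I_j}(x)$ of size at most $2^{t^1+t^2}$ but supported on a set of measure $2^{-i-j}$.

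The plan is therefore: fix $i\in\{0,\dots,t^1\}$, $j\in\{0,\dots,t^2\}$, and write $k = k_{(j)} + k^{(j+1)}$, splitting the summation index into its low part (coordinates $<j$, of which the ones $<i$ also matter through $k_i$) and high part (coordinates $\ge j+1$). The factor $\omega_k(x^1)\omega_{n+k}(x^2)$ then factors as a product of a ``low'' piece depending only on finitely many low coordinates of $x$ and a ``high'' piece of the form $\omega_{k^{(j+1)}}(x^1)\omega_{(n+k)^{(j+1)}}(x^2)$; the latter is precisely what $F_{j,s}$ in Corollary~\ref{corF+F} controls (modulo the carry $\delta(n_{(j)},k_{(j)})2^{j+1}$, which that corollary already absorbs). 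Since $D_{2^i}(x^1)D_{2^j}(x^2)$ restricts $x$ to the square $I_i\times I_j \subset I_{j+1}(u)$-type sets (refine to the finest scale $I_{\max(i,j)+?}$ as needed so the low coordinates of $x$ are fixed), summing the absolute values over the at most $2^{j}$-many low configurations of $k$ and applying Corollary~\ref{corF+F} on each small square gives, for the $(i,j)$ term,
\[
\int_{J_t}\Bigl|\,\sum_{k} (\text{$(i,j)$ part})\,\Bigr| dx \le C\,2^{i+j}\cdot 2^{\,t^1+t^2-i-j}\cdot\frac{1}{2^{2j}}\cdot 2^{j}\cdot 2^{s-j}\delta^{s-j},
\]
where the bookkeeping must be done carefully but is routine: the $2^{i+j}$ is the kernel height, the $2^{t^1+t^2-i-j}$ accounts for the freedom in the remaining coordinates of $x$ on $J_t$, and the $2^{2j}\int F_{j,s}\le C2^{s-j}\delta^{s-j}$ bound from the corollary supplies the decaying factor.

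Summing over $i\le t^1$ and $j\le t^2$ is the final step. The geometric-type sum in $j$ of $\delta^{s-j}$ against the growing factors is dominated by its top term $j=t^2$ up to a polynomial loss; one must track where the claimed power $(t^2-t^1+1)^3$ and the ratio $2^{t^1}/2^{t^2}$ come from — the $2^{t^1}/2^{t^2}$ reflects that the $x^1$-interval $J_{t^1}$ is ``large'' while $x^2\in J_{t^2}$ is ``small'', and the cubic polynomial factor is the price of the nested summations over $i$, over $j$, and over the supremum in $n$ (or over the block structure in Lemma~\ref{delta1}). \textbf{The main obstacle} I expect is the carry bookkeeping: arithmetic addition $n+k$ does not respect the dyadic block decomposition, so one must consistently separate $k$ into low and high halves, show the carry into the high half is a single bit already handled by Corollary~\ref{corF+F}, and verify that summing $|\cdot|$ over all $2^{\,j}$ low configurations of $k$ (rather than keeping the sum inside the absolute value) does not destroy the $\delta^{s-j}$ gain — it does not, because the gain is $2^{s-j}\delta^{s-j}$ with $2^{s-j}$ to spare, but making this quantitative and getting the exponents to match the statement is the delicate part.
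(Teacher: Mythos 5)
There is a genuine gap: your reduction discards exactly the cancellation that produces the factor $2^{t^1}/2^{t^2}$. After expanding by (\ref{dirichletkernel}) and splitting $k$ into a low and a high part, you bound the high sum via Corollary \ref{corF+F} and then sum \emph{absolute values} over all low configurations of $k$. Doing this at the natural scale $t^2$ gives $\bigl|\sum_k D_k(x^1)D_{n+k}(x^2)\bigr|\le C2^{t^2}\cdot 2^{t^1}\cdot 2^{t^2}\,F_{t^2,s}(x)$, and since $J_{t^1}\times J_{t^2}$ is covered by $2^{t^2-t^1}$ squares of side $2^{-t^2-1}$, integration yields only $C2^s\delta^{s-t^2}$ --- larger than the claimed bound by $2^{t^2-t^1}$, and that loss is fatal for the application (in Lemma \ref{sup_kernel_int_t3} the sum over $t^2>a$ diverges without the factor $2^{t^1-t^2}$). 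Your own bookkeeping fares no better: as written, your per-$(i,j)$ bound equals $C2^{t^1+t^2}2^s\delta^{s}(4\delta)^{-j}$, and since $4\delta>1$ the $j$-sum is dominated by the \emph{small}-$j$ terms, not by $j=t^2$ as you claim, giving an estimate of order $2^{t^1+t^2}2^s\delta^s$, which exceeds the target $2^{t^1-t^2}2^s\delta^{s-t^2}$ by $(4\delta)^{t^2}$. (Two further technical points: on $J_t$ the indicators $\mathbf{1}_{I_i\times I_j}$ are vacuous, so they yield no measure gain; and for $i>j$ the weight $k_i$ lies among the high coordinates, so the inner sum is not literally of the form controlled by $F_{j,s}$.)

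The missing idea is the counting argument that occupies Cases A--C of the paper's proof. One splits $k$ only at the scale $t^2$, keeps the full weights $\left(\sum_{u<t^1}k_u2^u-k_{t^1}2^{t^1}\right)$ and $\left(\sum_{v<t^2}(n+k)_v2^v-(n+k)_{t^2}2^{t^2}\right)$, and classifies $x^1\in J_{t^1}$ according to the position $t^1+i$ of its next nonzero coordinate. If there is none below $t^2$ (Case A), the trivial bound is admissible because that exceptional set has measure about $2^{-t^2}$ rather than $2^{-t^1}$. Otherwise, summing over the free bit $k_{t^1+i}$ (and, for the piece $B_2$, over further free bits $k_{l+j}$, $k_{t^1+i+m}$) against the Rademacher factor $(-1)^{k_{t^1+i}}$ coming from $x^1$ shows that a nonzero contribution forces a run of ones $(n+k)_{t^1+i}=\dots=(n+k)_{t^2-1}=1$, i.e.\ full carry propagation; this cuts the number of contributing low tuples $(k_0,\dots,k_{t^2})$ from $2^{t^2}$ down to about $2^{t^1+i}$, and it is this count, summed over $i,l,j,m$ in the subcases, that produces both $2^{t^1}/2^{t^2}$ and $(t^2-t^1+1)^3$. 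Your plan treats carries only as a nuisance in the high part (already absorbed by Corollary \ref{corF+F}); in fact the carries in the low part are the engine of the proof, and without exploiting them the stated inequality cannot be reached along your route.
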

\begin{proof}
Let $x = (x^1, x^2) \in J_t = J_{t^1}\times J_{t^2}$, $t^1\le t^2\le s$. Fix $x^j_{t^2+1},\dots, x^j_{s-1}$ ($j=1,2$) and $k_{t_2+1},\dots, k_{s-1}$. We give a bound for the number of tuples $(x^1_{t^1+1},\dots, x^1_{t^2}, k_0,\dots, k_{t_2})$ for which
\begin{equation}\label{D3}
\sum_{k_0,\dots, k_{t^2}\in\{0,1\}} D_k(x^1) D_{n+k}(x^2) \not= 0,
\end{equation}
where $n<2^s$ is a natural number. Since $x\in J_t$, then by (\ref{dirichletkernel}) we have
\begin{equation}\label{D1}
  D_k(x^1) = \omega_{k^{(t^1+1)}}(x^1)(-1)^{k_{t^1}}\left(\sum_{u=0}^{t^1-1}k_u2^u  -k_{t^1}2^{t^1}\right)
\end{equation}

and
\begin{equation}\label{D2}
D_{n+k}(x^2) = \omega_{(n+k)^{(t^2+1)}}(x^2)(-1)^{(n+k)_{t^2}}\left(\sum_{v=0}^{t^2-1}(n+k)_v2^v  -(n+k)_{t^2}2^{t^2}\right).
\end{equation}

\textbf{Case A.} $x^1_{t^1+1}=\dots = x^1_{t^2-1}=0$. This fact with provided that $x^1\in J_{t^1}$ with will be denoted by $x^1\in J_{t_1,0}$. Then the number of tuples $(x^1_{t^1+1},\dots, x^1_{t^2}, k_0,\dots, k_{t_2})$ (not only for those (\ref{D3}) holds) is bounded by
$C2^{t^2}$. This gives
\begin{equation*}
\begin{split}
& \Bigl| \sum_{k_0,\dots, k_{s-1}\in\{0,1\}} D_k(x^1) D_{n+k}(x^2)\Bigr| \\
 & \le  \Bigl|\sum_{k_0,\dots, k_{t^2}\in\{0,1\}}\omega_{k_{(t^2)}^{(t^1+1)}}(x^1)(-1)^{k_{t^1}+(n+k)_{t^2}}\left(\sum_{u=0}^{t^1-1}k_u2^u -k_{t^1}2^{t^1}\right)\\
&  \left(\sum_{v=0}^{t^2-1}(n+k)_v2^v -(n+k)_{t^2}2^{t^2}\right)
  \sum_{ k_{t^2+1}\dots,k_{s-1}\in\{0,1\}}\omega_{k^{(t^2+1)}}(x^1)\omega_{(n+k)^{(t^2+1)}}(x^2)
 \Bigr|\\
 & \le C 2^{t^2} 2^{t^1} 2^{t^2} \Bigl|\sum_{ k_{t^2+1}\dots,k_{s-1}\in\{0,1\}}\omega_{k^{(t^2+1)}}(x^1)\omega_{(n+k)^{(t^2+1)}}(x^2)
 \Bigr|
 \\
 & \le C 2^{t^1+2t^2}F_{t^2,s}(x).
 \end{split}
\end{equation*}

That is, we used
\begin{equation}\label{F+F}
  \Bigl|\sum_{ k_{t^2+1}\dots,k_{s-1}\in\{0,1\}}\omega_{k^{(t^2+1)}}(x^1)\omega_{(n+ k)^{(t^2+1)}}(x^2)\Bigr|
  \le F_{t^2,s}(x).
\end{equation}
Then by Corollary \ref{corF+F} we have
\begin{equation}\label{D4}
\begin{split}
&\int_{J_{t^1,0}\times J_{t^2}}\sup_{n<2^s}\left| \sum_{k=0}^{2^s-1}D_{k}(x^1)D_{n+k}(x^2) \right| dx\\
&
\le C 2^{t^1} 2^{2t^2}\int_{J_{t^2}\times J_{t^2}}F_{t^2,s}(x) dx
\le C 2^{t^1}2^{s-t^2} \delta^{s-t^2} \le C \frac{2^{t^1}}{2^{t^2}}2^s \delta^{s-t^2}.
\end{split}
\end{equation}

\textbf{Case B.} Suppose that $x^1_{t^1+1}=\dots = x^1_{t^1+i-1}=0, x^1_{t^1+i}=1$ for some $1\le i<t^2-t^1$. This fact with provided that $x^1\in J_{t^1}$  will be denoted by $x^1\in J_{t_1,i}$. In this case we give a bound for the integral of the maximal function (it means $\sup_{n<2^s}$) of the following function on the set $ J_{t^1,i}\times J_{t^2}$.

\begin{equation}\label{D5}
\begin{split}
& B_1:=
\sum_{k_0,\dots, k_{t^2}\in\{0,1\}}\omega_{k_{(t^2)}^{(t^1+1)}}(x^1)
(-1)^{k_{t^1}+(n+k)_{t^2}}
\left(\sum_{u=0}^{t^1-1}k_u2^u -k_{t^1}2^{t^1}\right)
 \left(\sum_{l=0}^{t^1+i-1}(n+k)_l2^l \right)\\
&  \sum_{ k_{t^2+1}\dots,k_{s-1}\in\{0,1\}}\omega_{k^{(t^2+1)}}(x^1)\omega_{(n+k)^{(t^2+1)}}(x^2).
 \end{split}
\end{equation}
Besides, for $x\in J_{t^1}\times J_{t^2}$
\begin{equation}\label{D5b2}
\begin{split}
& B_2:=
\sum_{k_0,\dots, k_{t^2}\in\{0,1\}}\omega_{k_{(t^2)}^{(t^1+1)}}(x^1)
(-1)^{k_{t^1}+(n+k)_{t^2}}
\left(\sum_{u=0}^{t^1-1}k_u2^u -k_{t^1}2^{t^1}\right)\\
& \left(\sum_{l=t^1+i}^{t^2-1}(n+k)_l2^l -(n+k)_{t^2}2^{t^2}\right)
 \sum_{ k_{t^2+1}\dots,k_{s-1}\in\{0,1\}}\omega_{k^{(t^2+1)}}(x^1)\omega_{(n+k)^{(t^2+1)}}(x^2)\\
 & \mbox{and}\\
 & \left|\sum_{k=0}^{2^s-1}D_k(x^1) D_{n+k}(x^2)\right| \le |B_1| + |B_2|.
 \end{split}
\end{equation}
In case B we give a bound for
\[
\int_{J_{t^1,i}\times J_{t^2}}\sup_{n<2^s}|B_1| dx
\]
and in case C with its subcases we do the same for $\sup_{n<2^s}|B_2|$. That is, turn our attention to $B_1$.
Have a look at the sum $(t^1+i<t^2)$ below (a part of the sum at \ref{D5})

\begin{equation}\label{B2}
\sum_{k_{t^1+i}=0}^1 \omega_{k_{(t^2)}^{(t^1+1)}}(x^1)\omega_{(n+k)^{(t^2+1)}}(x^2)(-1)^{k_{t^1}+(n+k)_{t^2}}\left(\sum_{u=0}^{t^1-1}k_u2^u -k_{t^1}2^{t^1}\right)
\sum_{l=0}^{t^1+i-1}(n+k)_l2^l.
\end{equation}

This sum can be different from zero only if $(n+k)^{(t^2+1)}(-1)^{(n+k)_{t^2}}$ depends on $k_{t^1+i}$. This also means that $(n+k)^{(t^2)}$ depends on $k_{t^1+i}$. That is, it changes when $k_{t^1+i}$ changes its value from $0$ to $1$. If this is not the case, then all addends depends on $k_{t^1+i}$ as $r_{t^1+i}^{k_{t^1+i}}(x^1) = (-1)^{k_{t^1+i}}$ and consequently  (\ref{B2}) would be $0$. This would give $B_1=0$.
On the other hand,
if $(n+k)^{(t^2)}$ depends on $k_{t^1+i}\in\{0,1\}$, then when $k_{t^1+i}=0$ we have that $(n+k)_{t^1+i}, (n+k)_{t^1+i+1}, \dots, (n+k)_{t^2-1}$ should be $1$ in order to have a change in $(n+k)^{(t^2)}$ as $k_{t^1+i}$ turns to $1$. That is, when $k$ is increased by $2^{t^1+i}$.

That is,
$n+k = (n+k)_{(t^1+i)} + 2^{t^1+i+1} +\dots + 2^{t^2-1} + (n+k)^{(t^2)}$. Consequently,
$k_{t^1+i+1}, \dots, k_{t^2-1}$ should be unchanged.
This implies that the number of tuples $(k_0,\dots, k_{t^2})$ (for any fixed $n<2^s$) satisfying this property is not more than
$C\frac{2^{t^2}}{2^{t^2-t^1-i}} = C2^{t^1+i}$.

By this fact and by (\ref{F+F}) we get an estimation for $B_1$ at (\ref{D5}):

\begin{equation}\label{D6}
\begin{split}
&  |B_1|\\
 & = \Bigl|\sum_{k_0,\dots, k_{t^2}\in\{0,1\}}\omega_{k_{(t^2)}^{(t^1+1)}}(x^1)
 (-1)^{k_{t^1}+(n+k)_{t^2}}
 \left(\sum_{u=0}^{t^1-1}k_u2^u -k_{t^1}2^{t^1}\right)
 \left(\sum_{l=0}^{t^1+i-1}(n+k)_l2^l \right)\\
&  \sum_{k_{t^2+1}\dots,k_{s-1}\in\{0,1\}}\omega_{k^{(t^2+1)}}(x^1)\omega_{(n+k)^{(t^2+1)}}(x^2)
 \Bigr|\\
 & \le C 2^{t^{1}+i} 2^{t^1} 2^{t^1+i} \Bigl|\sum_{ k_{t^2+1}\dots,k_{s-1}\in\{0,1\}}\omega_{k^{(t^2+1)}}(x^1)\omega_{(n+k)^{(t^2+1)}}(x^2)
 \Bigr| \\
  & \le C 2^{3t^1+2i}F_{t^2,s}(x).
 \end{split}
\end{equation}

Then again by Corollary \ref{corF+F} and by the fact that $F_{t^2,s}(x)$  ($s\in\mathbb N$) does not depend on $x^j_0,\dots, x^j_{t^2}$ ($j=1,2$) we have
\begin{equation}\label{DB}
\begin{split}
&\int_{J_{t^1,i}\times J_{t^2}}\sup_{n<2^s} |B_1| dx
\le C 2^{3t^1+2i} \frac{2^{t^2}}{2^{t^1+i}}  \int_{J_{t^2}\times J_{t^2}}F_{t^2,s}(x) dx
 \le C 2^{2t^1+i-t^2} 2^{s-t^2} \delta^{s-t^2}.
\end{split}
\end{equation}
Moreover,
\begin{equation}\label{DB2}
\begin{split}
&\int_{\left(\cup_{i=1}^{t^2-t^1-1}J_{t^1,i}\right)\times J_{t^2}}\sup_{n<2^s} |B_1| dx
 \le C \sum_{i=1}^{t^2-t^1-1}2^{2t^1+i-t^2} 2^{s-t^2} \delta^{s-t^2}
  \le C 2^{t^1}2^{s-t^2} \delta^{s-t^2}.
\end{split}
\end{equation}

\textbf{Case C.} Suppose that $x^1_{t^1+1}=\dots = x^1_{t^1+i-1}=0, x^1_{t^1+i}=1$ for some $1\le  i<t^2-t^1$. That is, $x^1\in J_{t^1, i}$ again as in case B. Then we give an upper bound for the maximal function of  $|B_2|$ on the set $J_{t^1,i}\times J_{t^2}$:
We use estimation (\ref{F+F}). That is,
\[
\Bigl|\sum_{ k_{t^2+1}\dots,k_{s-1}\in\{0,1\}}\omega_{k^{(t^2+1)}}(x^1)\omega_{(n+ k)^{(t^2+1)}}(x^2)\Bigr|
\le F_{t^2,s}(x).
\]

\begin{equation}\label{C1}
\begin{split}
& |B_2|\\
& = \Biggl|\sum_{k_0,\dots, k_{t^2}\in\{0,1\}} \omega_{k_{(t^2)}^{(t^1+1)}}(x^1)
(-1)^{k_{t^1}+(n+k)_{t^2}}
\left(\sum_{u=0}^{t^1-1}k_u2^u -k_{t^1}2^{t^1}\right)\\
&  \left(\sum_{l=t^1+i}^{t^2-1}(n+k)_l2^l -(n+k)_{t^2}2^{t^2}\right)
\sum_{ k_{t^2+1}\dots,k_{s-1}\in\{0,1\}}\omega_{k^{(t^2+1)}}(x^1)\omega_{(n+ k)^{(t^2+1)}}(x^2)\Biggr|\\
& \le \Biggl|\sum_{l=t^1+i}^{t^2-1}2^l\sum_{k_0,\dots, k_{t^1+i-1}\in\{0,1\}}
(-1)^{k_{t^1}}
\left(\sum_{u=0}^{t^1-1}k_u2^u -{k_{t^1}}2^{t^1}\right)\\
& \sum_{k_{t^1+i},\dots, k_{t^2}\in\{0,1\}}\omega_{k_{(t^2)}^{(t^1+1)}}(x^1)(n+k)_l (-1)^{(n+k)_{t^2}}
\sum_{k_{t^2+1}\dots,k_{s-1}\in\{0,1\}}\omega_{k^{(t^2+1)}}(x^1)\omega_{(n+ k)^{(t^2+1)}}(x^2)\Biggr|\\
&
+ \Biggl| 2^{t^2}\sum_{k_0,\dots, k_{t^1+i-1}\in\{0,1\}}(-1)^{k_{t^1}}\left(\sum_{u=0}^{t^1-1}k_u2^u -k_{t^1}2^{t^1}\right)\\
& \sum_{k_{t^1+i},\dots, k_{t^2}\in\{0,1\}}\omega_{k_{(t^2)}^{(t^1+1)}}(x^1) (n+k)_{t^2}
 \sum_{k_{t^2+1}\dots,k_{s-1}\in\{0,1\}}\omega_{k^{(t^2+1)}}(x^1)\omega_{(n+ k)^{(t^2+1)}}(x^2)\Biggr| \\
&=: |\sum_{l=t^1+i}^{t^2-1}B_{2,l}| + |\sum_2| =:|\sum_1| + |\sum_2|.
\end{split}
\end{equation}

We investigate  $\sum_1$.  $\sum_2$ can be treated in the same way.
That is, in case C we  discuss
\begin{equation}\label{C2}
\sum_{k_{t^1+i},\dots, k_{t^2}\in\{0,1\}}\omega_{k_{(t^2)}^{(t^1+1)}}(x^1)\omega_{(n+k)^{(t^2+1)}}(x^2) (n+k)_l (-1)^{(n+k)_{t^2}},
\end{equation}
i.e. the part in $B_{2,l}$ which depends on $k_{t^1+i},\dots, k_{t^2}$ (with fixed $n$ and fixed other $k_i$'s).

for $x\in J_t$, $x^1_{t^1+1}=\dots = x^1_{t^1+i-1}=0, x^1_{t^1+i}=1$ (for some $1\le i<t^2-t^1$ and $t^1+i\le l <t^2$).
Basically, we give a bound for  the number of tuples $(x^1_{t^1+1},\dots, x^1_{t^2}, k_0,\dots, k_{t_2})$ for which $B_{2,l}$ is not $0$. We have four subcases in investigation
of  $\int_{J_{t^1,i}}\sup_{n<2^s}|\sum_N| dx$ $(N=1,2)$.

\textbf{Case CA.}

\[
\begin{split}
& x^1 \in \{x\in I : x^1_0=\dots = x^1_{t^1-1}, x^1_{t^1}=1, x^1_{t^1+1}=\dots = x^1_{t^1+i-1}=0,  x^1_{t^1+i}=1, \\
& x^1_{t^1+i+1}=\dots = x^1_{l-1}=0, x^1_{l+1}=\dots = x^1_{t^2-1}=0\} =: J_{t^1,i,l}
\end{split}
\]
($x^1_l$ is either $0$ or $1$) for some $1\le i<t^2-t^1$ and $t^1+i\le l\le t^2-1$.

\textbf{Case CB.}
There exists a $1\le j\le t^2-l-1$ such that
\[
\begin{split}
& x^1 \in \{x\in I : x^1_0=\dots = x^1_{t^1-1}, x^1_{t^1}=1, x^1_{t^1+1}=\dots = x^1_{t^1+i-1}=0,  x^1_{t^1+i}=1, \\
& x^1_{t^1+i+1}=\dots = x^1_{l-1}=0, x^1_{l+1}=\dots = x^1_{l+j-1}=0, x^1_{l+j}=1\} =: J_{t^1,i,l,j}
\end{split}
\]

\textbf{Case CC.}

There exists a $1\le j\le t^2-l-1$ and $1\le m \le l-t^1-i$ such that
\[
\begin{split}
& x^1 \in \{x\in I : x^1_0=\dots = x^1_{t^1-1}, x^1_{t^1}=1, x^1_{t^1+1}=\dots = x^1_{t^1+i-1}=0,  x^1_{t^1+i}=1, \\
& x^1_{t^1+i+1}= \dots = x^1_{t^1+i+m-1}=0, x^1_{t^1+i+m}=1,   x^1_{l+1}=\dots = x^1_{l+j-1}=0, x^1_{l+j}=1\} \\
& =: J_{t^1,i,l,j,m},
\end{split}
\]

\textbf{Case CD.} There exists a $1\le m \le l-t^1-i$ such that
\[
\begin{split}
& x^1 \in \{x\in I : x^1_0=\dots = x^1_{t^1-1}, x^1_{t^1}=1, x^1_{t^1+1}=\dots = x^1_{t^1+i-1}=0,  x^1_{t^1+i}=1, \\
& x^1_{t^1+i+1}= \dots = x^1_{t^1+i+m-1}=0, x^1_{t^1+i+m}=1,   x^1_{l+1}=\dots = x^1_{t^2-1}=0\} =: J_{t^1,i,l,m}.
\end{split}
\]

The following inequality shows the structure of the investigation with respect to cases CA, CB, CC and CD and $\sum_1$.
\begin{equation}\label{C_structure}
\begin{split}
&  \int_{(\bigcup_{i=1}^{t^2-t^1-1}J_{t^1,i})\times J_{t^2}}\sup_{n<2^s}|\sum_{l=t^1+i}^{t^2-1}B_{2,l}| dx\\
& \le \sum_{i=1}^{t^2-t^1-1}\sum_{l=t^1+i}^{t^2-1}\int_{J_{t^1,i}\times J_{t^2}}\sup_{n<2^s}|B_{2,l}| dx\\
& \le \sum_{i=1}^{t^2-t^1-1}\sum_{l=t^1+i}^{t^2-1}\int_{J_{t^1,i,l}\times J_{t^2}}\sup_{n<2^s}|B_{2,l}| dx
+ \sum_{i=1}^{t^2-t^1-1}\sum_{l=t^1+i}^{t^2-1}\sum_{j=1}^{t^2-l-1}\int_{J_{t^1,i,l,j}\times J_{t^2}}\sup_{n<2^s}|B_{2,l}| dx\\
& + \sum_{i=1}^{t^2-t^1-1}\sum_{l=t^1+i}^{t^2-1}\sum_{j=1}^{t^2-l-1}\sum_{m=1}^{l-t^1-i}\int_{J_{t^1,i,l,j,m}\times J_{t^2}}\sup_{n<2^s}|B_{2,l}| dx\\
& + \sum_{i=1}^{t^2-t^1-1}\sum_{l=t^1+i}^{t^2-1}\sum_{m=1}^{l-t^1-i}\int_{J_{t^1,i,l,m}\times J_{t^2}}\sup_{n<2^s}|B_{2,l}| dx\\
& =:CA + CB + CC + CD.
\end{split}
\end{equation}

\textbf{Case CA} is easy to check and almost the same as case A. The main difference is that we will have to sum also with respect to $i$:
The number of tuples $(x^1_{t^1+1},\dots, x^1_{t^2}, k_0,\dots, k_{t_2})$ (not only for those (\ref{D3}) holds) bounded by
$C 2^{t^2}$, since the number of corresponding tuples $(x^1_{t^1+1},\dots, x^1_{t^2})$ is not more than $C$.
That is, having a look at (\ref{C1}):
\[
|B_{2,l}| \le 2^l 2^{t^1+i} 2^{t^1} 2^{t^2-t^1-i}F_{t^2,s}(x) \le C2^{t^1+l+t^2}F_{t^2,s}(x).
\]
Consequently,
\[
\int_{J_{t^1,i,l}\times J_{t^2}}\sup_{n<2^s}|B_{2,l}| dx
\le C2^{t^1+l+t^2}\int_{J_{t^2}\times J_{t^2}}F_{t^2,s}(x) dx
\le C2^{t^1+l+t^2}2^{-2t^2}2^{s-t^2}\delta^{s-t^2}.
\]

This immediately gives
\[
\sum_{i=1}^{t^2-t^1-1}\sum_{l=t^1+i}^{t^2-1}\int_{J_{t^1,i,l}\times J_{t^2}}\sup_{n<2^s} |B_{2,l}| dx
\le C(t^2-t^1)2^{t^1-t^2}2^s\delta^{s-t^2}.
\]
Similarly, (also by (\ref{C1}))
\[
\begin{split}
 |\sum_2| \le C 2^{t^2}2^{t^1+i}2^{t^1} 2^{t^2-t^1-i}F_{t^2,s}(x)
 \le C2^{t^1+2t^2}F_{t^2,s}(x),
\end{split}
\]
\[
\sum_{i=1}^{t^2-t^1-1}\sum_{l=t^1+i}^{t^2-1}\int_{J_{t^1,i,l}\times J_{t^2}}\sup_{n<2^s}|\sum_2| dx
\le C(t^2-t^1)^2 2^{t^1-t^2}2^s\delta^{s-t^2}.
\]

In \textbf{case CB} (\ref{C2})  equals with
\[
\sum_{k_{t^1+i}, k_{t^1+i+1}\dots, k_{l+j-1}, k_{l+j+1}, \dots k_{t^2}\in\{0,1\}}
(n+k)_l \sum_{k_{l+j}=0}^1  \omega_{k_{(t^2)}^{(t^1+1)}}(x^1)\omega_{(n+k)^{(t^2+1)}}(x^2)(-1)^{(n+k)_{t^2}}
\]
($(n+k)_l$ does not depend on $k_{l+j}$).
The sum $\sum_{k_{l+j}=0}^1  \omega_{k_{(t^2)}^{(t^1+1)}}(x^1)\omega_{(n+k)^{(t^2+1)}}(x^2)(-1)^{(n+k)_{t^2}}$ can be different from zero only in the case when $(n+k)^{(t^2+1)}(-1)^{(n+k)_{t^2}}$ changes as
$k_{l+j}$ turns from $0$ to $1$. That is, when $k_{l+j}=0$ we have that $(n+k)_{l+j}, (n+k)_{l+j+1}, \dots, (n+k)_{t^2-1}$ should be $1$ in order to have a change in $(n+k)^{(t^2+1)}(-1)^{(n+k)_{t^2}}$ as $k_{l+j}$ turns to $1$. That is, when $k$ is increased by $2^{l+j}$. That is,
$n+k = (n+k)_{(l+j-1)} + 2^{l+j} +\dots + 2^{t^2-1} + (n+k)^{(t^2)}$. This implies that the number of tuples $(k_0,\dots, k_{t^2})$ of this kind is not more than
$C\frac{2^{t^2}}{2^{t^2-l-j}} = C2^{l+j}$. Consequently, then number of tuples $(x^1_{t^1+1},\dots, x^1_{t^2}, k_0,\dots, k_{t_2})$ for which $B_{2,l}$ is not zero  is bounded by
$C 2^{t^2-l-j}2^{l+j} = C2^{t^2}$ for a fixed $i<t^2-t^1$.
That is, by (\ref{F+F}) and by the definition of $B_{2,l}$ at (\ref{C1}) we have
\[
|B_{2,l}| \le C2^l 2^{l+j} 2^{t^1} F_{t^2,s}
\]
and
\[
\begin{split}
& \int_{J_{t^1,i,l,j}\times J_{t^2}} \sup_{n<2^s}|B_{2,l}| dx \le C2^{2l+j+t^1} \int_{J_{t^1,i,l,j}\times J_{t^2}} F_{t^2,s}(x) dx \\
& \le C 2^{l+t^1+t^2}\int_{J_{t^2}\times J_{t^2}}F_{t^2,s}(x) dx \le C2^{l+t^1-t^2} 2^{s-t^2}\delta^{s-t^2}.
\end{split}
\]

This immediately gives (have a look at the ``structure'' at (\ref{C_structure}))
\begin{equation}\label{CB_summarizing}
\begin{split}
&  \sum_{i=1}^{t^2-t^1-1}\sum_{l=t^1+i}^{t^2-1}\sum_{j=1}^{t^2-l-1}
\int_{J_{t^1,i,l,j}\times J_{t^2}} \sup_{n<2^s}|B_{2,l}| dx
  \le C\sum_{i=1}^{t^2-t^1-1}\sum_{l=t^1+i}^{t^2-1}\sum_{j=1}^{t^2-l}2^{l+t^1-t^2} 2^{s-t^2}\delta^{s-t^2}\\
  & \le C(t^2-t^1)2^{t^1}2^{s-t^2}\delta^{s-t^2}.
  \end{split}
\end{equation}
The sum $\sum_2$ is discussed later.

Next, we investigate \textbf{case CC}.

We give a bound for $|B_{2,l}(x)|$ for $x\in J_{t^1,i,l,j,m}$ in a way that we find an estimation for the number of tuples $(k_0,\dots, k_{t^2})$
such that (\ref{C2}) is not zero. (If this is not the case, that is, $(k_0,\dots, k_{t^2})$ is so that (\ref{C2}) is  zero, then so does the corresponding addends in $B_{2,l}$.) Change the order of the summations in (\ref{C2}). It equals with
\begin{equation}\label{C3}
\begin{split}
&   \sum_{\substack{k_{t^1+i},\dots, k_{t^1+i+m-1},\\ k_{t^1+i+m+1},\dots, k_{l+j-1},\\  k_{l+j+1}, \dots, k_{t^2}\in\{0,1\}}}
  \prod_{\substack{t^2\ge u\ge t^1+1, \\ u\not= t^1+i+m\\ u\not= l+j}}r_u^{k_u}(x^1)
  \sum_{k_{t^1+i+m}=0}^1 (-1)^{k_{t^1+i+m}}(n+k)_l \\
  & \sum_{k_{l+j}=0}^1 (-1)^{k_{l+j}}\omega_{(n+k)^{(t^2+1)}}(x^2)(-1)^{(n+k)_{t^2}}.
  \end{split}
\end{equation}

We have a fix $n$ and if $k_{t^1+i+m}, k_{l+j}$ runs in $\{0,1\}$ with fixed other indices of $k$, then to avoid (\ref{C3}) to be zero
all the coordinates of $(n+k)_{t^1+i+m+1}, \dots, (n+k)_{l-1}, (n+k)_{l}$ and $(n+k)_{l+1},\dots, (n+k)_{t^2}$ should be $1$ for each $k_i$ $(0\le i\le t^2, i\not=t^1+i+m,
l+j)$
for those addends in $(\ref{C3})$ different from zero. If say, $(n+k)_a=0$ for some $t^1+i+m<a<l$, then as $k_{t^1+i+m}$ changes from $0$ to $1$, we do not have change in
$(n+k)_l$ and in $\omega_{(n+k)^{(t^2+1)}}(x^2)$ and consequently (\ref{C3}) is zero. That is, the number is the tuples
$(k_0,\dots, k_{t^2})$ such that (\ref{C2}) is not zero is not more than $C2^{t^2}2^{-(l-t^1-i-m)}2^{-(t^2-l-j)}=C2^{t^1+i+m+j}$.
Then, by (\ref{F+F}), by the definition of $B_{2,l}$ at (\ref{C1}) and by Corollary \ref{corF+F} we have
\begin{equation}\label{CC1}
\begin{split}
& \int_{J_{t^1,i,l,j,m}\times J_{t^2}}\sup_{n<2^s}|B_{2,l}(x)| dx
\le C\int_{J_{t^1,i,l,j,m}\times J_{t^2}} 2^l2^{t^1}2^{t^1+i+m+j}F_{t^2,s}(x)dx\\
& \le C2^{l-t^1-i-m}2^{t^2-l-j}2^l2^{t^1}2^{t^1+i+m+j}\int_{J_{t^2}\times J_{t^2}}F_{t^2,s}(x)dx\\
& = C 2^{l+t^1-t^2} 2^{2t^2}\int_{J_{t^2}\times J_{t^2}}F_{t^2,s}(x)dx \le C2^{l+t^1-t^2}2^{s-t^2}\delta^{s-t^2}
\end{split}
\end{equation}

This immediately gives
\begin{equation}\label{CC_summarizing}
\begin{split}
&  \sum_{i=1}^{t^2-t^1-1}\sum_{l=t^1+i}^{t^2-1}\sum_{j=1}^{t^2-l-1}\sum_{m=1}^{l-t^1-i}
\int_{J_{t^1,i,l,j,m}\times J_{t^2}} \sup_{n<2^s}|B_{2,l}(x)| dx\\
&  \le C \sum_{i=1}^{t^2-t^1-1}\sum_{l=t^1+i}^{t^2-1}\sum_{j=1}^{t^2-l-1}\sum_{m=1}^{l-t^1-i}
  2^{l+t^1-t^2}2^{s-t^2}\delta^{s-t^2}
\\
  & \le C(t^2-t^1)^32^{t^1}2^{s-t^2}\delta^{s-t^2}.
  \end{split}
\end{equation}
The sum $\sum_2$ is discussed later.

In \textbf{case CD}, in a similar way as above we  give a bound for $|B_{2,l}(x)|$ for $x\in J_{t^1,i,l,m}$. We do it in a way that we find an estimation for the number of tuples $(k_0,\dots, k_{t^2})$
such that (\ref{C2}) is not zero.  Change the order of the summations in (\ref{C2}). It equals with
\begin{equation}\label{C4}
  \sum_{\substack{k_{t^1+i},\dots, k_{t^1+i+m-1},\\ k_{t^1+i+m+1},\dots, k_{t^2}\in\{0,1\}}}
  \prod_{\substack{t^2\ge u>t^1, \\ u\not= t^1+i+m}}r_u^{k_u}(x^1)
  \sum_{k_{t^1+i+m}=0}^1 (-1)^{k_{t^1+i+m}}(n+k)_l \omega_{(n+k)^{(t^2)}}(x^2).
\end{equation}
Remember that $\omega_{(n+k)^{(t^2+1)}}(x^2)(-1)^{(n+k)_{t^2}} = \omega_{(n+k)^{(t^2)}}(x^2)$.
All the coordinates of $(n+k)_{t^1+i+m+1}, \dots, (n+k)_{l-1}, (n+k)_l$  should be $1$
for those addends in $(\ref{C4})$ different from zero. If say, $(n+k)_a=0$ for some $t^1+i+m<a<l$, then as $k_{t^1+i+m}$ changes from $0$ to $1$, we do not have change in
$(n+k)_l$ and in $(n+k)^{(t^2)}$ (and consequently in $\omega_{(n+k)^{(t^2)}}(x^2)$) and this would imply (\ref{C4}) to be zero. That is, the number is the tuples
$(k_0,\dots, k_{t^2})$ such that (\ref{C2}) is not zero is not more than $C2^{t^2}2^{-(l-t^1-i-m)}$.
Then, by (\ref{F+F}), by the definition of $B_{2,l}$ at (\ref{C1}) and by Corollary \ref{corF+F} we have
\begin{equation}\label{CD1}
\begin{split}
& \int_{J_{t^1,i,l,m}\times J_{t^2}}\sup_{n<2^s}|B_{2,l}(x)| dx\\
& \le C\int_{J_{t^1,i,l,m}\times J_{t^2}} 2^l2^{t^1} 2^{t^2}2^{-(l-t^1-i-m)}F_{t^2,s}(x)dx\\
& \le C2^{2t^1+t^2+i+m}2^{l-t^1-i-m}\int_{J_{t^2}\times J_{t^2}}F_{t^2,s}(x)dx\\
& = C 2^{l+t^1-t^2} 2^{2t^2}\int_{J_{t^2}\times J_{t^2}}F_{t^2,s}(x)dx \\
& \le C2^{l+t^1-t^2}2^{s-t^2}\delta^{s-t^2}.
\end{split}
\end{equation}
That is, exactly as in the case CC.  That is (have a look again at (\ref{C_structure})),
\begin{equation}\label{CD_summarizing}
\begin{split}
&   \sum_{i=1}^{t^2-t^1-1}\sum_{l=t^1+i}^{t^2-1}\sum_{m=1}^{l-t^1-i}
\int_{J_{t^1,i,l,j,m}\times J_{t^2}} \sup_{n<2^s}|B_{2,l}(x)| dx\\
&  \le C  \sum_{i=1}^{t^2-t^1-1}\sum_{l=t^1+i}^{t^2-1}\sum_{m=1}^{l-t^1-i}
  2^{l+t^1-t^2}2^{s-t^2}\delta^{s-t^2}
\\
  & \le C(t^2-t^1)^2 2^{t^1}2^{s-t^2}\delta^{s-t^2}.
  \end{split}
\end{equation}
The sum $\sum_2$ can be discussed in the same way. The only difference is that it is more simple. Basically, $\sum_2$ looks like a special $B_{2,l}$ for $l=t^2$. Consequently, there is no cases CB and CC. Only CA and CD cases make sense and these cases has already been investigated. That is, the proof of Lemma \ref{marc} is complete.
\end{proof}

Now, we turn our attention to a lemma concerning the maximal triangular kernel function. This estimation will consist of the three forthcoming lemmas.
First, $s\le t^1\le t^2$ (Lemma \ref{sup_kernel_int_t1}), then the second part will be $t^1 < s\le t^2$ (Lemma \ref{sup_kernel_int_t2}) and the third part will be $t^1\le t^2 < s$ (Lemma \ref{sup_kernel_int_t3}).
Recall that for $k\Ne$ $J_k = I_k\setminus I_{k+1}$ and $n^{(s)}:= \sum_{k=s}^{\infty}n_k2^k$ ($n,s\Ne$). $n^0 =n, n^{|n|+1}=0$.
The first part:
\begin{lem}\label{sup_kernel_int_t1} Let $a\in \mathbb{N}$. Then
\[
\sum_{t^1=0}^{a}\sum_{t^2=t^1}^{\infty}\int_{J_{t^1}\times J_{t^2}} \sup_{A\ge a}\sup_{|n|=A}\frac{1}{2^A}\sum_{s=0}^{t^1}
n_s\left|\sum_{k=0}^{2^s-1}D_{n^{(s+1)}+k}(x^1)D_{n-(n^{(s+1)}+k)}(x^2)\right| dx \le C.
\]
\end{lem}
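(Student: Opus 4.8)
The plan is to peel off the dyadic block decomposition of the index set $\{0,\dots,n-1\}$ and then run only crude pointwise estimates on the two Dirichlet factors. Writing $n=\sum_j n_j2^j$ and $A=|n|$, the sets $\{n^{(s+1)}+k:0\le k<2^s\}$, taken over those $s\le A$ with $n_s=1$, partition $\{0,1,\dots,n-1\}$ (for $m<n$ the relevant index $s$ is the position of the highest binary digit where $m$ and $n$ disagree). Since $D_0=0$ and $n-(n^{(s+1)}+k)=n_{(s)}-k$, this rewrites
\[
K_n^{\bigtriangleup}(x^1,x^2)=\frac{1}{n}\sum_{s=0}^{A}n_s\sum_{k=0}^{2^s-1}D_{n^{(s+1)}+k}(x^1)\,D_{n_{(s)}-k}(x^2),
\]
and because $2^A\le n$ the integrand of Lemma \ref{sup_kernel_int_t1} is exactly the part of the resulting elementary majorant of $\sup_{n\ge 2^a}|K_n^{\bigtriangleup}|$ coming from the indices $0\le s\le t^1$; the ranges $t^1<s\le t^2$ and $t^2<s\le A$ are the subjects of Lemmas \ref{sup_kernel_int_t2} and \ref{sup_kernel_int_t3}.

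The key observation is that for $s\le t^1$ no oscillation needs to be exploited. Fix $x^1\in J_{t^1}$, $x^2\in J_{t^2}$ and a term with $0\le s\le t^1\le t^2$ and $n_s=1$. Since $x^1\in J_{t^1}=I_{t^1}\setminus I_{t^1+1}$ we have $D_{2^i}(x^1)=2^i$ for $i\le t^1$ and $D_{2^i}(x^1)=0$ for $i>t^1$, so (\ref{dirichletkernel}) gives $|D_m(x^1)|\le\sum_{i=0}^{t^1}2^i<2^{t^1+1}$ for every $m$, in particular for $m=n^{(s+1)}+k$. For the second factor, $n_s=1$ forces $0<n_{(s)}-k<2^{s+1}$, so $|n_{(s)}-k|\le s\le t^2$, and the same argument on $J_{t^2}$ yields $|D_{n_{(s)}-k}(x^2)|\le\sum_{i=0}^{s}2^i<2^{s+1}$. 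Summing $|D_{n^{(s+1)}+k}(x^1)D_{n-(n^{(s+1)}+k)}(x^2)|$ over the $2^s$ values of $k$ and then over $s$,
\[
\frac{1}{2^A}\sum_{s=0}^{t^1}n_s\Bigl|\sum_{k=0}^{2^s-1}D_{n^{(s+1)}+k}(x^1)D_{n-(n^{(s+1)}+k)}(x^2)\Bigr|\le\frac{1}{2^A}\sum_{s=0}^{t^1}2^{s}\cdot2^{t^1+1}\cdot2^{s+1}\le C\,\frac{2^{3t^1}}{2^A}\le C\,\frac{2^{3t^1}}{2^a},
\]
the last step using $A\ge a$. All of this is uniform in $A\ge a$, in $n$ with $|n|=A$, and in $x\in J_{t^1}\times J_{t^2}$, so the bound persists after taking $\sup_{A\ge a}\sup_{|n|=A}$.

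It then remains to integrate and sum a geometric series. With $|J_{t^1}\times J_{t^2}|=2^{-t^1-t^2-2}$ the $(t^1,t^2)$-summand is at most $C\,2^{-t^1-t^2}\cdot2^{3t^1}\cdot2^{-a}=C\,2^{2t^1-t^2}2^{-a}$; summing over $t^2\ge t^1$ gives $C\,2^{t^1}2^{-a}$, and summing over $0\le t^1\le a$ gives $C\,2^{a}2^{-a}=C$, which is the claim. I do not expect a real obstacle: one only needs a little care with the borderline indices $s=t^1$ and $s=t^2$ in the two Dirichlet estimates (harmless, since each bound carries a spare factor $2$) and with bookkeeping which $s$-range belongs to this lemma versus Lemmas \ref{sup_kernel_int_t2}--\ref{sup_kernel_int_t3}. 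The genuinely delicate estimates are postponed to those two lemmas, where the crude bound above is false and one must invoke the cancellation of the Dirichlet kernels furnished by Lemma \ref{marc} (via Corollary \ref{corF+F}).
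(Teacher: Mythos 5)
Your proof is correct and takes essentially the same route as the paper: crude pointwise bounds on both Dirichlet factors obtained from (\ref{dirichletkernel}) on $J_{t^1}\times J_{t^2}$ (no cancellation needed for $s\le t^1$), followed by integration over $J_{t^1}\times J_{t^2}$ and geometric summation in $t^2$ and $t^1$. The only difference is cosmetic: you bound the second factor by $2^{s+1}$ (exploiting $0<n_{(s)}-k<2^{s+1}$) where the paper uses $C2^{t^2\wedge A}$, which spares you the paper's case split $t^2\le a$ versus $t^2>a$ but leads to the same conclusion.
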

\begin{proof}
For $t^1\le a, t^2\ge t^1$ and $x\in J_{t^1}\times J_{t^2}$ by the formula for the Dirichlet kernel function (see (\ref{dirichletkernel})) it is clear that
\[
|D_{n^{(s+1)}+k}(x^1)D_{n-(n^{(s+1)}+k)}(x^2)| \le C2^{t^1 + (t^2 \wedge A)},
\]
where $t^2 \wedge A :=\min\{t^2, A\}$. This gives

\[
\begin{split}
& \sum_{t^1=0}^{a}\sum_{t^2=t^1}^{\infty}\int_{J_{t^1}\times J_{t^2}} \sup_{A\ge a}\sup_{|n|=A}\frac{1}{2^A}\sum_{s=0}^{t^1}
\left|\sum_{k=0}^{2^s-1}D_{n^{(s+1)}+k}(x^1)D_{n_{(s)}-k}(x^2)\right| dx \\
& \le C\sum_{t^1=0}^{a}\sum_{t^2=t^1}^{\infty}\int_{J_{t^1}\times J_{t^2}} \sup_{A\ge a}\frac{1}{2^A}\sum_{s=0}^{t^1}
2^{s+t^1+ (t^2\wedge A)} dx\\
&  \le C\sum_{t^1=0}^{a}\sum_{t^2=t^1}^{a}\frac{1}{2^{t^1+t^2}} \sup_{A\ge a} 2^{2t^1+t^2-A} +
 \sum_{t^1=0}^{a}\sum_{t^2=a}^{\infty}\frac{1}{2^{t^1+t^2}}  2^{2t^1}\\
& \le C\sum_{t^1=0}^{a}\sum_{t^2=t^1}^{a}2^{t^1-a} + C\sum_{t^1=0}^{a}\sum_{t^2=a}^{\infty}2^{t^1-t^2} \\
& \le C.
\end{split}
\]
This completes the proof of Lemma \ref{sup_kernel_int_t1}.

\end{proof}

The second part:
\begin{lem}\label{sup_kernel_int_t2} Let $a\n$. Then
\[
\sum_{t^1=0}^{a}\sum_{t^2=t^1}^{\infty}\int_{J_{t^1}\times J_{t^2}} \sup_{A\ge a}\sup_{|n|=A}\frac{1}{2^A}\sum_{s=t^1+1}^{t^2}
n_s\left|\sum_{k=0}^{2^s-1}D_{n^{(s+1)}+k}(x^1)D_{n-(n^{(s+1)}+k)}(x^2)\right| dx \le C.
\]
\end{lem}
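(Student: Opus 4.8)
The plan is to reduce Lemma \ref{sup_kernel_int_t2} to the middle-index analogue of Lemma \ref{marc}, exactly as Lemma \ref{sup_kernel_int_t1} was a purely pointwise estimate. The key point is that for $x\in J_{t^1}\times J_{t^2}$ and $s$ in the range $t^1<s\le t^2$, the factor $D_{n^{(s+1)}+k}(x^1)$ involves the first variable, whose dyadic block decomposition starts only at level $t^1$, while the factor $D_{n-(n^{(s+1)}+k)}(x^2)$ involves $x^2$, which lies at level $t^2\ge s$, so that the second Dirichlet kernel is essentially of the form $D_{m}(x^2)$ for $m$ up to $2^{t^2}$. The quantity $n-(n^{(s+1)}+k)=n_{(s)}-k=\sum_{j<s}n_j2^j-k$ runs, as $k$ ranges over $\{0,\dots,2^s-1\}$, through a translate of $\{0,\dots,2^s-1\}$, and likewise $n^{(s+1)}+k$ runs through a translate; so after writing $n-(n^{(s+1)}+k)$ and $n^{(s+1)}+k$ with a common "shift" the inner sum $\sum_{k=0}^{2^s-1}D_{n^{(s+1)}+k}(x^1)D_{n_{(s)}-k}(x^2)$ is, up to relabeling of indices, of the shape handled in Lemma \ref{marc}, namely $\sup_{n<2^s}|\sum_{k=0}^{2^s-1}D_k(x^1)D_{n+k}(x^2)|$ but with the roles of $x^1,x^2$ and the orders of magnitude $t^1,t^2$ arranged to match.

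The steps I would carry out are: first, for fixed $A\ge a$ and $|n|=A$, use $D_{2^i}$ localization to bound the first-variable kernel on $J_{t^1}$: since $x^1\in J_{t^1}$, $D_{n^{(s+1)}+k}(x^1)$ depends on the bits of $n^{(s+1)}+k$ at levels $\le t^1$ only through a bounded-size factor of magnitude $\le C2^{t^1}$, times a Walsh character at higher levels, exactly as in (\ref{D1}); here $s>t^1$ so all of $k_0,\dots,k_{t^1}$ are "low" bits. Second, observe that for fixed high bits $k_{t^1+1},\dots,k_{s-1}$ the sum over $k_0,\dots,k_{t^1}$ produces a Dirichlet-type block of controllable size, and the remaining sum over the middle bits together with the $x^2$-character is precisely of the type controlled by the $L^1$ bound of Lemma \ref{marc} after identifying its "$t^1$" with the present $t^1$, its "$t^2$" with $s-1$ (or $s$), and its "$s$" with $t^2$ or $A$. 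Third, summing the resulting geometric bound $C(\cdot)^3 (2^{t^1}/2^{s})2^{t^2}\delta^{t^2-s}$ (or the analogous expression with $A$ in place of $t^2$) over $s$ from $t^1+1$ to $t^2$, then over $t^2\ge t^1$, then over $t^1\le a$, and finally taking $\sup_{A\ge a}$, should give a convergent total because of the $\delta$-geometric decay in $t^2-s$ and the $1/2^A$ normalization. The polynomial factors $(t^2-t^1+1)^3$ are absorbed by the geometric factors.

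The main obstacle I anticipate is bookkeeping the two different "high" thresholds that appear simultaneously: within a single term we have the first variable sitting at level $t^1$, the summation variable $k$ ranging up to $2^s$, and the second variable sitting at level $t^2\ge s$, plus the outer parameter $A=|n|\ge a$ which may be larger or smaller than $t^2$. One must split on whether $A\le t^2$ or $A>t^2$ (as already done in Lemma \ref{sup_kernel_int_t1}), and in the second variable's Dirichlet kernel one must track the carry $\delta(n_{(s)},-k)$ — i.e. the fact that $n-(n^{(s+1)}+k)$ is not literally $n^{(s+1)}+k$ added to a shift but a difference — which is why Lemma \ref{marc} was stated with the "$+n$" shift rather than a clean translate. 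Getting the index identification between the present inner sum and the statement of Lemma \ref{marc} exactly right, including which variable plays the role of $x^1$ versus $x^2$, is the delicate part; once that dictionary is fixed, the summations are routine geometric series. I expect the proof to mirror the structure of Lemma \ref{sup_kernel_int_t1}'s proof in its outer summation but to invoke Lemma \ref{marc} where that proof only needed the trivial pointwise bound $C2^{t^1+(t^2\wedge A)}$.
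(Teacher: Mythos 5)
There is a genuine gap: your plan to invoke Lemma \ref{marc} (after relabelling indices) cannot work in the regime of this lemma, because the hypotheses and the mechanism of Lemma \ref{marc} are tied to the opposite inequality between $s$ and $t^2$. Lemma \ref{marc} requires $t^1\le t^2<s$, and its gain $\delta^{s-t^2}$ comes from the almost--orthogonality of the products $\omega_{k^{(t^2+1)}}(x^1)\omega_{(n+k)^{(t^2+1)}}(x^2)$ over the bits \emph{above} level $t^2$ (Lemma \ref{delta1} and Corollary \ref{corF+F}); that cancellation only exists when the summation range $2^s$ exceeds $2^{t^2}$. Here $t^1<s\le t^2$, so there are no such high bits, and in fact the second factor carries no oscillation at all: since $n_s=1$ may be assumed, $0\le n-(n^{(s+1)}+k)=n_{(s)}-k<2^s\le 2^{t^2}$ and $x^2\in I_{t^2}$, hence $D_{n_{(s)}-k}(x^2)=n_{(s)}-k$ is just a number. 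There is no dictionary identifying ``its $t^2$ with $s$ and its $s$ with $t^2$ or $A$'' that recreates the $\delta$--decay, and the decay you write down, $\delta^{t^2-s}$, has no source. Note also that the trivial pointwise bound ($|D_k(x^1)|\le C2^{t^1}$, $|n_{(s)}-k|\le 2^s$) is not sufficient: after integrating over $J_{t^1}\times J_{t^2}$ and taking $\sup_{A\ge a}$ the resulting sums over $t^2$ diverge, so genuine cancellation in the $k$--sum is indispensable.

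The cancellation actually lives entirely in the first variable and is one--dimensional. Writing $D_{n^{(s+1)}+k}(x^1)=\omega_{n^{(s+1)}}(x^1)D_k(x^1)$ (valid since $x^1\in J_{t^1}$ and $s>t^1$ give $D_{n^{(s+1)}}(x^1)=0$), the inner sum becomes $\bigl|\sum_{k=0}^{2^s-1}D_k(x^1)(n_{(s)}-k)\bigr|$, which by Abel transformation is controlled by $C2^{2s}|K_{2^s}(x^1)|+\bigl|\sum_{k=0}^{2^s-1}(k+1)K_{k+1}(x^1)\bigr|$. The smallness then comes from Yano's estimate ($|K_{2^s}(x^1)|\le C2^{t^1}$, supported on $I_s(e_{t^1})$, a set of measure $2^{-s}$) and from the integrated maximal Fej\'er kernel bound (\ref{supKnMemi}) of Memi\'c, after which the sums over $s$, $t^2$, $t^1$ and the supremum over $A$ close up. Your proposal is missing exactly this step (Abel transform plus the one--dimensional Fej\'er kernel estimates); Lemma \ref{marc} is the right tool only for the third regime $t^2<s$, i.e.\ Lemma \ref{sup_kernel_int_t3}.
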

\begin{proof}
Since $x\in J_{t^1}$ and $s>t^1$, then $D_{n^{(s+1)}}(x^1)=0$ and consequently $D_{n^{(s+1)}+k}(x^1)=\omega_{n^{(s+1)}}(x^1)D_k(x^1)$. On the other hand,
$n_s=1$ can be supposed and $0\le n - (n^{(s+1)}+k) = n_{(s)} - k <2^s$, $s\le t^2, x^2\in J_{t^2}\subset I_{t^2}$ gives
\[
D_{n_{(s)} - k}(x^2) = n_{(s)} - k.
\]
Thus, also by the help of the Abel transform (for $x^1\notin I_s$ as $x^1\in J_{t^1} = I_{t^1}\setminus I_{t^1+1}, t^1+1\le s$)

\[
\begin{split}
& n_s\left|\sum_{k=0}^{2^s-1}D_{n^{(s+1)}+k}(x^1)D_{n-(n^{(s+1)}+k)}(x^2)\right|
  = n_s \left|\sum_{k=0}^{2^s-1}D_k(x^1)(n_{(s)} - k)\right| \\
& \le
C2^{2s}\left|K_{2^s}(x^1)\right| + \left|\sum_{k=0}^{2^s-1}kD_k(x^1)\right|
 \le C2^{2s}\left|K_{2^s}(x^1)\right| + \left|\sum_{k=0}^{2^s-1}(k+1)K_{k+1}(x^1)\right|.
\end{split}
\]

In \cite{yan} one can find the estimation: If $t^1<s, x^1\in J_{t^1}$, then $|K_{2^s}(x^1)| \le C2^{t^1}$ for $x^1\in I_s(e_{t^1})$ and $|K_{2^s}(x^1)|=0$ for $x^1\notin I_s(e_{t^1})$.
This gives
\[
\begin{split}
&
\sum_{t^1=0}^{a}\sum_{t^2=t^1}^{\infty}\int_{J_{t^1}\times J_{t^2}} \sup_{A\ge a}\sup_{|n|=A}\frac{1}{2^A}\sum_{s=t^1+1}^{t^2\wedge A}
n_s 2^{2s}\left|K_{2^s}(x^1)\right| dx \\
& \le \sum_{t^1=0}^{a}\sum_{t^2=t^1}^{\infty}\int_{I_s(e_{t^1})\times J_{t^2}} \sup_{A\ge a}\sup_{|n|=A}\frac{1}{2^A}\sum_{s=t^1+1}^{t^2\wedge A}
2^{2s+t^1} dx \\
 & \le C \sum_{t^1=0}^{a}\sum_{t^2=t^1}^{\infty}\sup_{A\ge a}\sum_{s=t^1+1}^{t^2\wedge A}2^{s+t^1-t^2-A}\\
 & \le C \sum_{t^1=0}^{a}\sum_{t^2=t^1}^{a}\sup_{A\ge a}2^{t^1-A} +
  C\sum_{t^1=0}^{a}\sum_{t^2=a+1}^{\infty}\sup_{t^2>A\ge a}2^{t^1-t^2}+
 C \sum_{t^1=0}^{a}\sum_{t^2=a+1}^{\infty}\sup_{A\ge t^2}2^{t^1-A}\\
 & \le C\sum_{t^1=0}^{a}(a-t^1+1)2^{t^1-a} + C\sum_{t^1=0}^{a}2^{t^1-a} + C\sum_{t^1=0}^{a}2^{t^1-a} \le C.
\end{split}
\]
We investigate $\left|\sum_{k=0}^{2^s-1}(k+1)K_{k+1}(x^1)\right|$. (Also use the fact that $s\le t^2, A$.)
\[
\begin{split}
&
\sum_{t^1=0}^{a}\sum_{t^2=t^1}^{\infty}\int_{J_{t^1}\times J_{t^2}} \sup_{A\ge a}\sup_{|n|=A}\frac{1}{2^A}\sum_{s=t^1+1}^{t^2\wedge A}
n_s\left|\sum_{k=0}^{2^s-1}(k+1)K_{k+1}(x^1)\right| dx \\
& \le C \sum_{t^1=0}^{a}\sum_{t^2=t^1}^{\infty}\int_{J_{t^1}\times J_{t^2}} \sup_{A\ge a}\frac{1}{2^A}\sum_{s=t^1+1}^{t^2\wedge A}
\left|\sum_{k=0}^{2^{t^1}-1}(k+1)K_{k+1}(x^1)\right| dx \\
& +
C \sum_{t^1=0}^{a}\sum_{t^2=t^1}^{\infty}\int_{J_{t^1}\times J_{t^2}} \sup_{A\ge a}\frac{1}{2^A}\sum_{s=t^1+1}^{t^2\wedge A}
\left|\sum_{k=2^{t^1}}^{2^s-1}(k+1)K_{k+1}(x^1)\right| dx =: A_{\ref{sup_kernel_int_t2}} + B_{\ref{sup_kernel_int_t2}}.
\end{split}
\]
First, discuss $A_{\ref{sup_kernel_int_t2}}$ by $|K_{k+1}(x^1)|\le C2^{t^1}$ ($x\in J_{t^1}$):
\[
\begin{split}
& A_{\ref{sup_kernel_int_t2}} \le C \sum_{t^1=0}^{a}\sum_{t^2=t^1}^{\infty}\frac{1}{2^{t^1+t^2}}\sup_{A\ge a}\sum_{s=t^1+1}^{t^2\wedge A}2^{3t^1-A}\\
& \le C \sum_{t^1=0}^{a}\sum_{t^2=t^1}^{\infty}\sup_{A\ge a}((t^2\wedge A)-t^1)2^{2t^1-A-t^2} \\
& \le C \sum_{t^1=0}^{a}\sum_{t^2=t^1}^{\infty}((t^2\wedge a)-t^1)2^{2t^1-a-t^2} \\
& \le C \sum_{t^1=0}^{a}\sum_{t^2=t^1}^{a}(t^2-t^1)2^{2t^1-a-t^2} + C \sum_{t^1=0}^{a}\sum_{t^2=a+1}^{\infty}(a-t^1)2^{2t^1-a-t^2} \le C.
\end{split}
\]
Next and finally in Lemma \ref{sup_kernel_int_t2}, discuss $B_{\ref{sup_kernel_int_t2}}$.
In \cite{mem} one can find the inequality
\begin{equation}\label{supKnMemi}
  \int_{J_{t^1}}\sup_{n\ge 2^A}|K_n(x^1)| dx^1 \le C\frac{2^{t^1}}{2^A}(A-t^1+1) \quad (A\ge t^1).
\end{equation}
By the help of this inequality we have
\[
\begin{split}
& B_{\ref{sup_kernel_int_t2}} \le C \sum_{t^1=0}^{a}\sum_{t^2=t^1}^{\infty}\sum_{A=a}^{\infty}
\frac{1}{2^A}\sum_{s=t^1+1}^{t^2\wedge A}\sum_{i=t^1}^{s-1}\sum_{k=2^i}^{2^{i+1}-1}
(k+1)\int_{J_{t^1}}|K_{k+1}(x^1)| dx^1 \m{J_{t^2}}\\
& \le C\sum_{t^1=0}^{a}\sum_{t^2=t^1}^{\infty}\sum_{A=a}^{\infty}\frac{1}{2^A}\sum_{s=t^1+1}^{t^2\wedge A}\sum_{i=t^1}^{s-1}\sum_{k=2^i}^{2^{i+1}-1}(k+1)\frac{2^{t^1}}{2^i}(i-t^1+1)2^{-t^2}\\
& \le C\sum_{t^1=0}^{a}\sum_{t^2=t^1}^{\infty}\sum_{A=a}^{\infty}\sum_{s=t^1+1}^{t^2\wedge A}\sum_{i=t^1}^{s-1}2^{i+t^1-t^2-A}(i-t^1+1)\\
& \le C\sum_{t^1=0}^{a}\sum_{t^2=t^1}^{\infty}\sum_{A=a}^{\infty}\sum_{s=t^1+1}^{t^2\wedge A}2^{s+t^1-t^2-A}(s-t^1)\\
& \le C\sum_{t^1=0}^{a}\sum_{t^2=t^1}^{\infty}\sum_{A=a}^{\infty}2^{(t^2\wedge A)+t^1-t^2-A}((t^2\wedge A)-t^1)\\
& \le C\sum_{t^1=0}^{a}\sum_{t^2=t^1}^{a}\sum_{A=a}^{\infty}2^{t^2+t^1-t^2-A}(t^2-t^1) +
 C\sum_{t^1=0}^{a}\sum_{t^2=a+1}^{\infty}\sum_{A=a}^{t^2}2^{t^1-t^2}(A-t^1) \\
 & +
 C\sum_{t^1=0}^{a}\sum_{t^2=a+1}^{\infty}\sum_{A=t^2+1}^{\infty}2^{t^1-A}(t^2-t^1)\\
 &  \le C\sum_{t^1=0}^{a}\sum_{t^2=t^1}^{a}2^{t^1-a}(t^2-t^1)
 + C \sum_{t^1=0}^{a}\sum_{t^2=a+1}^{\infty}2^{t^1-t^2}(t^2-t^1)^2
 + C \sum_{t^1=0}^{a}\sum_{t^2=a+1}^{\infty}2^{t^1-t^2}(t^2-t^1)\\
 & C \sum_{t^1=0}^{a}2^{t^1-a}(a-t^1)^2 + C \sum_{t^1=0}^{a}2^{t^1-a}(a-t^1)^2 + C \sum_{t^1=0}^{a}2^{t^1-a}(a-t^1)
 \le C.
\end{split}
\]
This completes the proof of Lemma \ref{sup_kernel_int_t2}.
\end{proof}

The third part is:

\begin{lem}\label{sup_kernel_int_t3} Let $a\n$. Then
\[
\sum_{t^1=0}^{a}\sum_{t^2=t^1}^{\infty}\int_{J_{t^1}\times J_{t^2}} \sup_{A\ge a}\sup_{|n|=A}\frac{1}{2^A}\sum_{s=t^2+1}^{A}
n_s\left|\sum_{k=0}^{2^s-1}D_{n^{(s+1)}+k}(x^1)D_{n-(n^{(s+1)}+k)}(x^2)\right| dx \le C.
\]
\end{lem}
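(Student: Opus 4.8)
The plan is to follow the scheme of Lemmas \ref{sup_kernel_int_t1} and \ref{sup_kernel_int_t2}, but now the decay in $s$ needed to kill the normalising factor $2^{A}$ must be produced by Walsh orthogonality, i.e. by Lemma \ref{marc}. Fix $x=(x^{1},x^{2})\in J_{t^{1}}\times J_{t^{2}}$ with $0\le t^{1}\le t^{2}<s\le A$ and $n_{s}=1$. Since $s+1>t^{1}$ and $x^{1}\in J_{t^{1}}$, (\ref{dirichletkernel}) gives $D_{n^{(s+1)}}(x^{1})=0$, so $D_{n^{(s+1)}+k}(x^{1})=\omega_{n^{(s+1)}}(x^{1})D_{k}(x^{1})$ for $0\le k<2^{s}$, while $n-(n^{(s+1)}+k)=n_{(s)}-k$ with $n_{(s)}\in[2^{s},2^{s+1})$. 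Hence on $J_{t^{1}}\times J_{t^{2}}$,
\[
\left|\sum_{k=0}^{2^{s}-1}D_{n^{(s+1)}+k}(x^{1})D_{n-(n^{(s+1)}+k)}(x^{2})\right|=\left|\sum_{k=0}^{2^{s}-1}D_{k}(x^{1})D_{n_{(s)}-k}(x^{2})\right|,
\]
and since, as $n$ runs over $|n|=A$ with $n_{s}=1$, the number $n_{(s)}$ runs over all of $[2^{s},2^{s+1})$, the supremum over such $n$ of the last quantity equals $\sup_{2^{s}\le N<2^{s+1}}\big|\sum_{k=0}^{2^{s}-1}D_{k}(x^{1})D_{N-k}(x^{2})\big|$, which no longer depends on $A$. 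The substitution $k\mapsto2^{s}-1-k$ rewrites it as $\big|\sum_{k=0}^{2^{s}-1}D_{2^{s}-1-k}(x^{1})D_{m+k}(x^{2})\big|$ with $m:=N-2^{s}+1\in[1,2^{s}]$: a Marcinkiewicz-type correlation sum in which the first factor is the \emph{complemented} Dirichlet kernel $D_{2^{s}-1-k}$.

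The key step is then the analogue of Lemma \ref{marc} for the complemented kernel,
\[
\int_{J_{t^{1}}\times J_{t^{2}}}\ \sup_{1\le m\le2^{s}}\ \left|\sum_{k=0}^{2^{s}-1}D_{2^{s}-1-k}(x^{1})D_{m+k}(x^{2})\right|dx\ \le\ C(t^{2}-t^{1}+1)^{3}\frac{2^{t^{1}}}{2^{t^{2}}}\,2^{s}\delta^{s-t^{2}},
\]
which I would get by re-running the proof of Lemma \ref{marc} with $D_{k}(x^{1})$ replaced by $D_{2^{s}-1-k}(x^{1})$. This works because for $x^{1}\in J_{t^{1}}$ the first $s$ binary digits of $2^{s}-1-k$ are obtained from those of $k$ by flipping each bit (no borrows): thus $|D_{2^{s}-1-k}(x^{1})|\le C2^{t^{1}}$ exactly as for $D_{k}(x^{1})$; its ``low'' part again has size $\le C2^{t^{1}}$, only with a relabelled sign pattern; and its oscillatory part $\omega_{(2^{s}-1-k)^{(t^{1}+1)}}(x^{1})$ equals $\omega_{k^{(t^{1}+1)}}(x^{1})$ times the $k$-independent unimodular factor $\omega_{2^{s}-2^{t^{1}+1}}(x^{1})$. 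Consequently every pairing with $\omega_{(n+k)^{(t^{1}+1)}}(x^{1})$ in that proof, every appeal to Corollary \ref{corF+F}, and every count of how many tuples $(k_{0},\dots,k_{t^{2}})$ keep a given sub-sum nonzero survive unchanged (bit-flipping is a bijection on each digit). The borderline $m=2^{s}$ reduces to $m=0$ via $D_{2^{s}+k}(x^{2})=\omega_{2^{s}}(x^{2})D_{k}(x^{2})$ on $J_{t^{2}}$ (valid as $s>t^{2}$), and the at most $2^{t^{1}}$ indices $k$ with $2^{s}-1-k<2^{t^{1}}$ contribute at most $C2^{2t^{1}+t^{2}}$ pointwise on $J_{t^{1}}\times J_{t^{2}}$, harmless after integration.

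For the assembly, nonnegativity of the sub-sums gives
\[
\sup_{A\ge a}\frac{1}{2^{A}}\sum_{s=t^{2}+1}^{A}(\cdots)\ \le\ \sum_{A\ge a}\frac{1}{2^{A}}\sum_{s=t^{2}+1}^{A}(\cdots)\ =\ 2\sum_{s>t^{2}}\frac{1}{2^{\max(a,s)}}(\cdots),
\]
so integrating term by term over $J_{t^{1}}\times J_{t^{2}}$ and inserting the displayed estimate bounds the $J_{t^{1}}\times J_{t^{2}}$-integral of the left-hand side of Lemma \ref{sup_kernel_int_t3} by $C(t^{2}-t^{1}+1)^{3}2^{t^{1}-t^{2}}\sum_{s>t^{2}}2^{s}\delta^{s-t^{2}}/2^{\max(a,s)}$. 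Since $\delta=\big((2^{16}-1)/2^{16}\big)^{1/16}$ has $2\delta>1$ and $\delta<1$, one has $\sum_{s=t^{2}+1}^{a}2^{s}\delta^{s-t^{2}}\le C2^{a}\delta^{a-t^{2}}$, hence the last quantity is $\le C(t^{2}-t^{1}+1)^{3}2^{t^{1}-t^{2}}\delta^{a-t^{2}}$ when $t^{2}\le a$ and $\le C(t^{2}-t^{1}+1)^{3}2^{t^{1}-t^{2}}$ when $t^{2}>a$. Summing over $0\le t^{1}\le t^{2}$ is routine: for $t^{2}\le a$ set $v=t^{2}-t^{1}$ and use $\sum_{v\ge0}(v+1)^{3}(2\delta)^{-v}<\infty$ together with $\sum_{t^{1}\le a}\delta^{a-t^{1}}\le C$; for $t^{2}>a$ use the factor $2^{t^{1}-t^{2}}$ and $\sum_{w\ge1}(w+M)^{3}2^{-w}\le C(M+1)^{3}$. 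Either way the double sum is $\le C$, which is the assertion.

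The step I expect to be the main obstacle is the complemented version of Lemma \ref{marc}: one must actually check that its lengthy case analysis — the regions $J_{t^{1},i}$, $J_{t^{1},i,l}$, $J_{t^{1},i,l,j}$, $J_{t^{1},i,l,j,m}$, $J_{t^{1},i,l,m}$ and the carry bookkeeping for the digits $(n+k)_{\ell}$ — is genuinely insensitive to replacing $D_{k}(x^{1})$ by $D_{2^{s}-1-k}(x^{1})$. The shortcut of expanding $D_{2^{s}-1-k}(x^{1})=\omega_{2^{s}-1}(x^{1})\omega_{k}(x^{1})\big(\sum_{i\le t^{1}}r_{i}(x^{1})2^{i}-\omega_{k}(x^{1})D_{k}(x^{1})\big)$ and reducing to honest Marcinkiewicz sums (using $\omega_{k}=D_{k+1}-D_{k}$) does not work: the stray term $\sum_{i\le t^{1}}r_{i}(x^{1})2^{i}$ has magnitude $\asymp 2^{t^{1}}$, and that extra $2^{t^{1}}$ is exactly enough to wreck the summation over $t^{1}$ above (it would leave a $2^{a}$).
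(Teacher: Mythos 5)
Your reduction to the reversed correlation sum and your final summation over $s$, $A$, $t^1$, $t^2$ follow the same scheme as the paper, but the pivotal step of your argument is left unproven: everything hinges on a ``complemented'' analogue of Lemma \ref{marc} for $\sum_k D_{2^s-1-k}(x^1)D_{m+k}(x^2)$, which you only assert could be obtained ``by re-running'' the long case analysis (Cases A, B, CA--CD, the carry bookkeeping, the appeals to Corollary \ref{corF+F}); you yourself flag this as the main obstacle and do not carry it out. As written this is a genuine gap: the claim is plausible, but its verification is precisely the multi-page core of Lemma \ref{marc}, and ``bit-flipping is a bijection on each digit'' is not a substitute for checking that the cancellation criteria and tuple counts in Cases B, CB, CC, CD are unaffected when the $x^1$-factor's sign and low part are relabelled.

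Moreover, the shortcut you dismiss is exactly how the paper closes this step, and your reason for rejecting it is a miscalculation. On $J_{t^1}$ with $t^1<s$ one has $D_{2^s}(x^1)=0$, and by the identity $D_{2^s-j}=D_{2^s}-\omega_{2^s-1}D_j$ (the paper uses it with $j=k$ after the substitution $k\mapsto 2^s-k$, instead of your $k\mapsto 2^s-1-k$) one gets $D_{2^s-1-k}(x^1)=-\omega_{2^s-1}(x^1)D_{k+1}(x^1)$. Equivalently, your ``stray term'' $\sum_{i\le t^1}r_i(x^1)2^i$ is not of size $2^{t^1}$ on $J_{t^1}$: since $x^1_i=0$ for $i<t^1$ and $x^1_{t^1}=1$, it equals $2^{t^1}-1-2^{t^1}=-1$. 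Hence
\[
\Bigl|\sum_{k=0}^{2^s-1}D_{2^s-1-k}(x^1)D_{m+k}(x^2)\Bigr|
=\Bigl|\sum_{k=0}^{2^s-1}D_{k}(x^1)D_{(m-1)+k}(x^2)\Bigr|,
\]
after shifting the index and using $D_0=0$ and $D_{2^s}(x^1)=0$, with $m-1<2^s$; so Lemma \ref{marc} applies verbatim and no complemented version is needed. The paper's proof is exactly this: $n-(n^{(s+1)}+k)=n_{(s-1)}+2^s-k$, the substitution $k\mapsto 2^s-k$, the identity $D_{2^s-k}(x^1)=-\omega_{2^s-1}(x^1)D_k(x^1)$, then Lemma \ref{marc} and the same geometric summation you perform. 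With this replacement your assembly of the series is fine; without it, your proposal rests on an unestablished lemma.
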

\begin{proof}
First, for fixed $t=(t^1, t^2), s, A$ we discuss the integral
\[
\int_{J_{t^1}\times J_{t^2}} \sup_{|n|=A}
n_s\left|\sum_{k=0}^{2^s-1}D_{n^{(s+1)}+k}(x^1)D_{n-(n^{(s+1)}+k)}(x^2)\right| dx.
\]
This means that $n_s=1$ can be supposed. Otherwise the integral is zero. Since $x^1\in J_{t^1}=I_{t_1}\setminus I_{t^1+1}, s>t^2\ge t^1$, then $D_{n^{(s+1)}+k}(x^1) = D_{n^{(s+1)}}(x^1) + \omega_{n^{(s+1)}}(x^1)D_k(x^1) = \omega_{n^{(s+1)}}(x^1)D_k(x^1)$. We also have $n-(n^{(s+1)}+k)=n_{(s)}-k = n_{(s-1)}+n_s2^s-k = n_{(s-1)}+2^s-k $ and
consequently by $D_0(x^1)=0, D_{2^s}(x^1)=0$
\[
\begin{split}
& n_s\left|\sum_{k=0}^{2^s-1}D_{n^{(s+1)}+k}(x^1)D_{n-(n^{(s+1)}+k)}(x^2)\right| =
\left|\sum_{k=0}^{2^s-1}D_{k}(x^1)D_{n_{(s-1)}+2^s-k}(x^2)\right| \\
& = \left|\sum_{k=0}^{2^s-1}D_{2^s-k}(x^1)D_{n_{(s-1)}+k}(x^2)\right|
= \left|\sum_{k=0}^{2^s-1}D_{k}(x^1)D_{n_{(s-1)}+k}(x^2)\right|.
\end{split}
\]

The last equality is given by $D_{2^s-k} = D_{2^s} - \omega_{2^s-1}D_k$ (see e.g. \cite{gog}) and $D_{2^s}(x^1)=0$.
By Lemma \ref{marc} we have

\begin{equation}\label{sup_kernel_int_t3b}
\begin{split}
&
\int_{J_{t^1}\times J_{t^2}} \sup_{|n|=A}
n_s\left|\sum_{k=0}^{2^s-1}D_{n^{(s+1)}+k}(x^1)D_{n-(n^{(s+1)}+k)}(x^2)\right| dx\\
& \le
\int_{J_{t^1}\times J_{t^2}} \sup_{|n|=A}
\sup_{n_{(s-1)}\in\mathbb{N}}\left|\sum_{k=0}^{2^s-1}D_{k}(x^1)D_{n_{(s-1)}+k}(x^2)\right| dx\\
& =\int_{J_{t^1}\times J_{t^2}}
\sup_{n<2^s}\left|\sum_{k=0}^{2^s-1}D_{k}(x^1)D_{n+k}(x^2)\right| dx\\
& \le C\frac{2^{t^1}}{2^{t^2}}2^s \delta^{s-t^2}(t^2-t^1+1)^3.
\end{split}
\end{equation}

By (\ref{sup_kernel_int_t3b}) it immediately follows (recall that $0<\delta<1$ ``close to $1$'')
\[
\begin{split}
&
\sum_{t^1=0}^{a}\sum_{t^2=t^1}^{\infty}\int_{J_{t^1}\times J_{t^2}} \sup_{A\ge a}\sup_{|n|=A}\frac{1}{2^A}\sum_{s=t^2+1}^{A}
n_s\left|\sum_{k=0}^{2^s-1}D_{n^{(s+1)}+k}(x^1)D_{n-(n^{(s+1)}+k)}(x^2)\right| dx \\
& \le
C\sum_{t^1=0}^{a}\sum_{t^2=t^1}^{\infty}\sum_{A\ge (a\vee t^2)}\frac{1}{2^A}\sum_{s=t^2+1}^{A}
\frac{2^{t^1}}{2^{t^2}}2^s \delta^{s-t^2}(t^2-t^1+1)^3\\
& \le C\sum_{t^1=0}^{a}\sum_{t^2=t^1}^{\infty}\sum_{A\ge (a\vee t^2)}\frac{2^{t^1}}{2^{t^2}} \delta^{A-t^2}(t^2-t^1+1)^3\\
& \le C\sum_{t^1=0}^{a}\left(\sum_{t^2=t^1}^{a} \sum_{A=a}^{\infty}2^{t^1-t^2}\delta^{A-t^2}(t^2-t^1+1)^3
+ \sum_{t^2=a+1}^{\infty}\sum_{A=t^2+1}^{\infty}2^{t^1-t^2}\delta^{A-t^2}(t^2-t^1+1)^3
\right)\\
& \le C\sum_{t^1=0}^{a}\left(\sum_{t^2=t^1}^{a} 2^{t^1-t^2}\delta^{a-t^2}(t^2-t^1+1)^3
+ \sum_{t^2=a+1}^{\infty}2^{t^1-t^2}(t^2-t^1+1)^3
\right)\\
& \le C \sum_{t^1=0}^{a}\delta^{a-t^1} + C \sum_{t^1=0}^{a}2^{t^1-a}(a-t^1+1)^3 \le C.
\end{split}
\]
This completes the proof of Lemma \ref{sup_kernel_int_t3}.
\end{proof}

Then we turn our attention to the main tool of the proof of Theorem \ref{maintheorem}. That is, to
Lemma \ref{sup_kernel_int}.

\noindent
\textit{Proof of Lemma \ref{sup_kernel_int}}
If $a=0$, then $I^2\setminus (I_a\times I_a) = \emptyset$ and we have nothing to prove. That is, we can suppose that $a\ge 1$. We prove the almost everywhere  relation
\[
I^2\setminus (I_a\times I_a) \subset \left(\bigcup_{t^1=0}^{a-1}\bigcup_{t^2=t^1}^{\infty}J_{t^1}\times J_{t^2}\right) \bigcup \left(\bigcup_{t^2=0}^{a-1}\bigcup_{t^1=t^2}^{\infty}J_{t^1}\times J_{t^2}\right) =:J^1\bigcup J^2.
\]
This will be quite easy. Let
$x=(x^1, x^2)\in I^2\setminus (I_a\times I_a)$. Then, either $x^1$ or $x^2$ (or both) is not an element of $I_a$. Say, $x^1\notin I_a$. Then
$x\in J_{t^1}$ for some $t^1<a$. If $x^2\in I_a$ and $x^2\not=0$, then $x\in J^1$. If $x^1\in J_{t^1}$
and $x^2\notin I_a$, then $x^1\in J_{t^1}$ and $x^2\in J_{t^2}$ for some $t^1, t^2<a$. For $t^2\ge t^1$ we have
$x\in J^1$ and for $t^1\ge t^2$ we have $x\in J^2$. This procedure can be done if $x^1, x^2\not=0$. The set of the points $x=(x^1, x^2)$, where either $x^1=0$ or $x^2=0$  is a zero measure set, so this can be supposed and the a.e. relation $I^2\setminus (I_a\times I_a) \subset J^1\times J^2$ is proved. That is, by Lemmas \ref{sup_kernel_int_t1}, \ref{sup_kernel_int_t2}, \ref{sup_kernel_int_t3} and by the formula of $K_n^{\bigtriangleup}$ the proof of Lemma \ref{sup_kernel_int} is  complete.
 \hfill\qed

\begin{cor}\label{L1norm_of_K_n} Let $n\p$. Then
\[
\|K_n^{\bigtriangleup}\|_1 \le C.
\]
\end{cor}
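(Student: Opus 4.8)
The plan is to deduce Corollary~\ref{L1norm_of_K_n} directly from Lemma~\ref{sup_kernel_int}, splitting $I^2$ into the part away from the diagonal ``small square'' $I_{|n|}\times I_{|n|}$, where Lemma~\ref{sup_kernel_int} applies, and that small square itself, where a crude size bound for the kernel beats the small measure of the square. Fix $n\p$ and set $a:=|n|$, so that $2^{a}\le n<2^{a+1}$. Then write
\[
\|K_n^{\bigtriangleup}\|_1 = \int_{I^2\setminus(I_a\times I_a)}|K_n^{\bigtriangleup}(x)|\,dx + \int_{I_a\times I_a}|K_n^{\bigtriangleup}(x)|\,dx .
\]

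For the first integral I would observe that $n\ge 2^{a}$, hence $|K_n^{\bigtriangleup}(x)|\le \sup_{m\ge 2^{a}}|K_m^{\bigtriangleup}(x)|$ pointwise, so Lemma~\ref{sup_kernel_int} (with this value of $a$) bounds it by $C$. For the second integral I would use the trivial estimate $|D_k(x)|\le k$, which is immediate from $D_k=\sum_{j=0}^{k-1}\omega_j$ and $|\omega_j|\equiv 1$; combining it with the representation $K_n^{\bigtriangleup}(x^1,x^2)=\frac1n\sum_{i=1}^{n-1}D_i(x^1)D_{n-i}(x^2)$ established in the computation preceding Theorem~\ref{maintheorem} gives, for every $x\in I^2$,
\[
|K_n^{\bigtriangleup}(x)| \le \frac1n\sum_{i=1}^{n-1} i\,(n-i) \le n^2 .
\]
Since $\mes(I_a\times I_a)=2^{-2a}$ and $2^{a}>n/2$ (because $n<2^{a+1}$), we have $\mes(I_a\times I_a)<4/n^2$, so the second integral is $< n^2\cdot 4/n^2 = 4$. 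Adding the two contributions yields $\|K_n^{\bigtriangleup}\|_1\le C$.

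There is no real obstacle here; the statement is a routine consequence of Lemma~\ref{sup_kernel_int}. The only points needing (minor) care are ensuring that the supremum appearing in Lemma~\ref{sup_kernel_int} actually dominates $|K_n^{\bigtriangleup}|$ for the given $n$ — which is exactly why one chooses $a=|n|$ — and that the trivial $O(n^2)$ bound for the kernel is matched against the $O(n^{-2})$ measure of the exceptional square, both of which follow at once from $2^{|n|}\le n<2^{|n|+1}$.
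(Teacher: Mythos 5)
Your proposal is correct and follows essentially the same route as the paper: apply Lemma~\ref{sup_kernel_int} with $a=|n|$ on $I^2\setminus(I_{|n|}\times I_{|n|})$, and on the square $I_{|n|}\times I_{|n|}$ use the crude pointwise bound $|K_n^{\bigtriangleup}|\le Cn^2\le C2^{2|n|}$ (the paper via $|D_k|\le C2^{|n|}$, you via $|D_k|\le k$) against the measure $2^{-2|n|}$. No gaps; the two arguments differ only in this cosmetic detail.
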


\begin{proof}
By Lemma \ref{sup_kernel_int} we have
\[
\int_{I^2\setminus (I_{|n|}\times I_{|n|})}|K_n^{\bigtriangleup}| \le C.
\]
Besides,
\[
|K_n^{\bigtriangleup}(x)| \le \frac{1}{n}\sum_{k=0}^{n-1}|D_{k}(x^1)| |D_{n-k}(x^2)| \le
C\frac{1}{n}\sum_{k=0}^{n-1} 2^{|n|}\cdot 2^{|n|} \le C2^{2|n|}.
\]
Hence,
\[
\int_{I_{|n|}\times I_{|n|}}|K_n^{\bigtriangleup}| \le C
\]
and this completes the proof of Corollary \ref{L1norm_of_K_n}.
\end{proof}
Now, we can prove that the maximal operator $\sigma^{\bigtriangleup}_*$ is quasi-local (for the definition of quasi-locality see e.g. \cite[page 262]{sws}) and then a bit later the fact that it is of weak type $(L^1, L^1)$. In other words:
\begin{lem}\label{quasi_local}  Let $f\in L^1(I^2)$, $\int_{I^2}f=0$, $\supp f\subset I_a(u^1)\times I_a(u^2)$  for some $u\in I^2$ and $a\Ne$. Then
\[
\int_{I^2\setminus (I_a(u^1)\times I_a(u^2))}\sigma_*^{\bigtriangleup}f(x) dx \le C\|f\|_1.
\]
\end{lem}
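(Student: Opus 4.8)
The plan is to deduce the lemma from Lemma~\ref{sup_kernel_int} by the familiar "quasi-locality via a kernel estimate" argument, the only subtlety being that Lemma~\ref{sup_kernel_int} controls the kernel only for indices $n\ge 2^a$, so the low-order means must be disposed of first using the mean-zero hypothesis. Throughout write $I_a(u):=I_a(u^1)\times I_a(u^2)$, and record that dyadic translation $y\mapsto y+z$ is a measure-preserving involution of $I^j$. Starting from the convolution form given in the excerpt, the substitution $t=x+y$ yields
\[
\sigma_n^{\bigtriangleup}f(y)=\int_{I^2}f(x+y)K_n^{\bigtriangleup}(x)\,dx=\int_{I_a(u)}f(t)\,K_n^{\bigtriangleup}(t+y)\,dt ,
\]
the range shrinking to $I_a(u)$ because $\supp f\subset I_a(u)$; likewise $S_{k,n-k}f(y)=\int_{I_a(u)}f(t)D_k(t^1+y^1)D_{n-k}(t^2+y^2)\,dt$.

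The first step is to show $\sigma_n^{\bigtriangleup}f\equiv 0$ for every $n\le 2^a$. If $1\le k\le n-1$ then $k,n-k<2^a$, hence $|k|,|n-k|\le a-1$, and by (\ref{dirichletkernel}) (together with the fact that $D_{2^i}$ is supported on $I_i$) the functions $D_k$ and $D_{n-k}$ depend only on the first $a$ dyadic coordinates of their arguments. Consequently, for $t\in I_a(u)$ the values $D_k(t^1+y^1)$ and $D_{n-k}(t^2+y^2)$ do not depend on $t$, so in the formula for $S_{k,n-k}f(y)$ they factor out and the remaining integral is $\int_{I^2}f=0$; since $D_0=0$ the term $k=0$ also vanishes, and averaging over $k$ gives $\sigma_n^{\bigtriangleup}f\equiv 0$. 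Therefore $\sigma_*^{\bigtriangleup}f(x)=\sup_{n\ge 2^a}|\sigma_n^{\bigtriangleup}f(x)|$. This reduction is the crux: for $n<2^a$ one only has $\|K_n^{\bigtriangleup}\|_\infty\le C2^{2|n|}$, whose integral over $I^2\setminus(I_a\times I_a)$ is of order $2^{2a}$, so without first eliminating those indices the estimate would fail -- it is exactly the cancellation $\int f=0$ that rescues us here.

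For $n\ge 2^a$ I would then bound, pointwise in $y$,
\[
|\sigma_n^{\bigtriangleup}f(y)|\le\int_{I_a(u)}|f(t)|\,\sup_{m\ge 2^a}|K_m^{\bigtriangleup}(t+y)|\,dt ,
\]
take the supremum over $n$ (the right-hand side is $n$-free), integrate in $y$ over $I^2\setminus I_a(u)$, and apply Fubini to get
\[
\int_{I^2\setminus I_a(u)}\sigma_*^{\bigtriangleup}f(y)\,dy\le\int_{I_a(u)}|f(t)|\left(\int_{I^2\setminus I_a(u)}\sup_{m\ge 2^a}|K_m^{\bigtriangleup}(t+y)|\,dy\right)dt .
\]
Finally, for fixed $t\in I_a(u)$ the substitution $z=t+y$ is measure preserving, and since $t^j$ and $u^j$ agree on the first $a$ coordinates one has $y^j\in I_a(u^j)\iff z^j\in I_a$; hence $y$ ranges over $I^2\setminus I_a(u)$ exactly when $z$ ranges over $I^2\setminus(I_a\times I_a)$, so the inner integral equals $\int_{I^2\setminus(I_a\times I_a)}\sup_{m\ge 2^a}|K_m^{\bigtriangleup}(z)|\,dz\le C$ by Lemma~\ref{sup_kernel_int}. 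This gives $\int_{I^2\setminus I_a(u)}\sigma_*^{\bigtriangleup}f\le C\|f\|_1$. So apart from the input Lemma~\ref{sup_kernel_int}, the only thing to watch is the vanishing of the means of index $\le 2^a$; everything else is bookkeeping with dyadic translations and Fubini.
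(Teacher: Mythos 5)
Your proposal is correct and follows essentially the same route as the paper: you eliminate the means of index at most $2^a$ using the fact that $K_n^{\bigtriangleup}$ (equivalently the factors $D_k$, $D_{n-k}$) is $\mathcal{A}_{a,a}$-measurable together with $\int_{I^2}f=0$, and then handle $n\ge 2^a$ by Fubini, a measure-preserving dyadic translation, and Lemma~\ref{sup_kernel_int}. The only cosmetic difference is that the paper first normalizes $u=0$ by shift invariance, whereas you carry the translation through the substitution $z=t+y$ explicitly.
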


\begin{proof}
From the shift invariancy of the Lebesgue measure we can suppose that $u^1=u^2=0$. If $n < 2^a$, then we have
the kernel $K^{\bigtriangleup}_n(x^1, x^2)$ (which is a linear combination of two-dimensional Walsh-Paley functions $\omega_{j,k}$ with $j,k<2^a$) is $\mathcal A_{a,a}$ measurable. This implies
\[
\sigma^{\bigtriangleup}_nf(y) = \int_{I_a\times I_a} f(x)K^{\bigtriangleup}_n (y+x) dx = K^{\bigtriangleup}_n (y)\int_{I_a\times I_a} f(x) dx =0.
\]
That is, $n\ge 2^a$ can be supposed. By the theorem of Fubini and Lemma \ref{sup_kernel_int} we get
\[
\begin{split}
& \int_{I^2\setminus I^2_a}\sigma_*^{\bigtriangleup}f = \int_{I^2\setminus I^2_a}\sup_{n\ge 2^a}|\sigma_n^{\bigtriangleup}f|
\\ &=
\int_{I^2\setminus I^2_a}\sup_{n\ge 2^a}|\int_{I_a^2} f(x)K^{\bigtriangleup}_n(y+x) dx| dy \\
& \le
\int_{I_a^2} |f(x)|\int_{I^2\setminus I^2_a}\sup_{n\ge 2^a} |K^{\bigtriangleup}_n(z) dz| dx \le C\int_{I_a^2} |f(x)| dx = C\|f\|_1.
\end{split}
\]
This completes the proof of Lemma \ref{quasi_local}.
\end{proof}
\begin{thm}\label{weak1_1}
The operator $\sigma^{\bigtriangleup}_*$ is of weak type $(L^1, L^1)$ and it is also of type $(L^{p}, L^{p})$ for all $1<p\le \infty$.
\end{thm}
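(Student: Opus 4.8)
The plan is to obtain Theorem \ref{weak1_1} from the two facts already established --- the uniform bound $\norm{K_n^{\bigtriangleup}}_1\le C$ of Corollary \ref{L1norm_of_K_n} and the quasi-locality of $\sigma_*^{\bigtriangleup}$ of Lemma \ref{quasi_local} --- via the standard Calder\'on--Zygmund argument (cf.\ \cite[Chapter~2]{sws}). First observe that $\sigma_*^{\bigtriangleup}$ is a sublinear operator from $L^1(I^2)$ to $L^0(I^2)$: every $\sigma_n^{\bigtriangleup}f=f*K_n^{\bigtriangleup}$ is well defined because $K_n^{\bigtriangleup}$ is a finite linear combination of Walsh--Paley functions, and the supremum defining $\sigma_*^{\bigtriangleup}f$ runs over the countable index set $n\p$, hence is measurable.

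The first step is the $(L^\infty,L^\infty)$ bound, which is immediate: for $f\in L^\infty(I^2)$ and $n\p$,
\[
\abs{\sigma_n^{\bigtriangleup}f(y)}=\abs{\int_{I^2}f(x+y)K_n^{\bigtriangleup}(x)\,dx}\le\norm{f}_\infty\norm{K_n^{\bigtriangleup}}_1\le C\norm{f}_\infty ,
\]
so $\norm{\sigma_*^{\bigtriangleup}f}_\infty\le C\norm{f}_\infty$; in particular the theorem holds for $p=\infty$.

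The second step is the weak type $(L^1,L^1)$ inequality. Fix $f\in L^1(I^2)$ and $\lambda>0$, and apply the dyadic Calder\'on--Zygmund decomposition of $f$ at level $\lambda$ \emph{relative to the dyadic squares}, i.e.\ with respect to the filtration $(\Cal{A}_{m,m})_{m\n}$ and the expectations $E_{m,m}$. This produces pairwise disjoint dyadic squares $Q^k=I_{a_k}(u^{k,1})\times I_{a_k}(u^{k,2})$ with $\sum_k\m{Q^k}\le\norm{f}_1/\lambda$ and a splitting $f=g+b$, $b=\sum_k b_k$, with $\norm{g}_\infty\le C\lambda$, $\supp b_k\subset Q^k$, $\int_{I^2}b_k=0$ and $\sum_k\norm{b_k}_1\le 2\norm{f}_1$. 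By the $(L^\infty,L^\infty)$ bound, $\sigma_*^{\bigtriangleup}g\le C\lambda$ everywhere, whence $\m{\set{\sigma_*^{\bigtriangleup}g>C\lambda}}=0$. For the bad part, sublinearity gives $\sigma_*^{\bigtriangleup}b\le\sum_k\sigma_*^{\bigtriangleup}b_k$; splitting $I^2$ into $\bigcup_k Q^k$ and its complement, applying Chebyshev's inequality on the complement, and then invoking Lemma \ref{quasi_local} (which applies precisely because each $b_k$ has mean zero and is supported on a dyadic square of the form $I_a(u^1)\times I_a(u^2)$) to bound each $\int_{I^2\setminus Q^k}\sigma_*^{\bigtriangleup}b_k\le C\norm{b_k}_1$, we get
\[
\m{\set{\sigma_*^{\bigtriangleup}b>\lambda}}\le\sum_k\m{Q^k}+\frac{C}{\lambda}\sum_k\norm{b_k}_1\le\frac{\norm{f}_1}{\lambda}+\frac{2C\norm{f}_1}{\lambda}.
\]
Combining the estimates for $g$ and $b$ and adjusting constants yields $\m{\set{\sigma_*^{\bigtriangleup}f>\lambda}}\le C\norm{f}_1/\lambda$, i.e.\ $\sigma_*^{\bigtriangleup}$ is of weak type $(L^1,L^1)$.

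The last step is interpolation: since $\sigma_*^{\bigtriangleup}$ is sublinear, of weak type $(L^1,L^1)$ and of type $(L^\infty,L^\infty)$, the Marcinkiewicz interpolation theorem furnishes the strong $(L^p,L^p)$ estimate for every $1<p<\infty$, which together with the case $p=\infty$ finishes the proof. I do not expect a genuine obstacle in this argument, since all of the analytic difficulty has already been absorbed into Lemma \ref{sup_kernel_int}; the one point that must be handled with care is to perform the Calder\'on--Zygmund decomposition over dyadic \emph{squares} (the martingale $E_{m,m}$) rather than over general dyadic rectangles, so that the exceptional cubes $Q^k$ fed into Lemma \ref{quasi_local} have exactly the shape $I_a(u^1)\times I_a(u^2)$ required there.
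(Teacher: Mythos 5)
Your proposal is correct and follows essentially the same route as the paper: the paper likewise combines the $(L^{\infty},L^{\infty})$ bound from Corollary \ref{L1norm_of_K_n} with the quasi-locality of Lemma \ref{quasi_local} and then invokes the standard Calder\'on--Zygmund-type argument from \cite{sws} for the weak $(L^1,L^1)$ bound, followed by Marcinkiewicz interpolation. You have merely written out in detail (dyadic-square decomposition via $E_{m,m}$, mean-zero bad parts fed into the quasi-locality lemma) what the paper leaves as a citation, which is exactly the intended argument.
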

\begin{proof}

Now, we know that operator $\sigma^{\bigtriangleup}_*$ is of type $(L^{\infty}, L^{\infty})$ which is given by Corollary \ref{L1norm_of_K_n} and it is quasi-local (Lemma \ref{quasi_local}). Consequently, to prove that operator $\sigma^{\bigtriangleup}_*$ is of weak type $(L^1, L^1)$ is nothing else but to follow the standard argument (see e.g. \cite{sws}). Finally, the interpolation lemma of Marcinkiewicz (see e.g. \cite{sws}) gives that it is also of type $(L^{p}, L^{p})$ for all $1<p\le \infty$.
\end{proof}

\noindent
\textit{Proof of Theorem \ref{maintheorem}. }Next, we turn our attention to the proof of the theorem of convergence, that is, Theorem \ref{maintheorem}.
This is also a trivial consequence of the fact that the maximal operator $\sigma^{\bigtriangleup}_*$ is of weak type $(L^1, L^1)$ and the fact that Theorem \ref{maintheorem}
holds for each two-dimensional Walsh-Paley polynomial (which is also very easy to see). By the standard density argument the proof of Theorem \ref{maintheorem} is complete. \hfill\qed


\begin{thebibliography}{10}
\bibliographystyle{amsplain}


\bibitem{ant}
N.Yu.~Antonov,  \emph{Convergence of Fourier series},
Proceedings of the XX Workshop on Function Theory
(Moscow, 1995), East J. Approx., \textbf{2} (1996), no.~2,
187-196.


\bibitem{fef}
 C.~Fefferman, \emph{On the convergence of multiple Fourier series}, Bull. Amer. Math. Soc.,
 \textbf{77} (1971), no.~5, 744--745.

\bibitem{Fin}
N.J.~Fine, \emph{{Ces\`aro summability of Walsh-Fourier series}}, Proc. Nat.
  Acad. Sci. U.S.A., \textbf{41} (1955), 558--591.


\bibitem{gatproc}
G.~G\'at, \emph{{On the divergence of the $(C,1)$ means of double Walsh-Fourier
  series}}, Proc. Am. Math. Soc., \textbf{128} (2000), no.~6, 1711--1720.


\bibitem{gatMlike}
G.~G\'at, \emph{{On almost everywhere convergence and divergence of Marcinkiewicz-like
means of integrable functions with respect to the two-dimensional Walsh system}},
J. of Approx. Theory,   \textbf{164} (2012), no.~1, 145--161.

\bibitem{gat_trig}
G.~G\'at, \emph{{Pointwise convergence of cone-like restricted two-dimensional
  $(C,1)$ means of trigonometric Fourier series}}, J. Approximation Theory,
  \textbf{149} (2007), no.~1, 74--102.

\bibitem{gat_gog}
G.~G\'at and U.~Goginava, \emph{{Almost everywhere convergence of dyadic triangular-Fej\'er means of two-dimensional Walsh-Fourier series}}, Mathematical Inequalities \& Applications,   \textbf{19} (2016), no.~2, 401--415.


\bibitem{gog}
U.~Goginava, \emph{{Almost everywhere summability of multiple Fourier series}},
  Math. Anal. and Appl., \textbf{287} (2003), no.~1, 90--100.

\bibitem{gog2}
U.~Goginava, \emph{{Marcinkiewicz-Fej\'er means of $d$-dimensional Walsh--Fourier
  series}}, J. Math. Anal. Appl., \textbf{307} (2005), no.~1, 206--218.


\bibitem{gw}
U.~Goginava and F.~Weisz, \emph{{Maximal operator of the Fej\'er means of triangular
partial sums of two-dimensional Walsh-Fourier series}},
Georgian Math. J., \textbf{19} (2012), no.~1, 101--115.


\bibitem{har}
D.~Harris, \emph{Almost everywhere divergence of multiple
Walsh-Fourier series}, Proc. Amer. Math. Soc., \textbf{101} (1987), no.~4,
637-643.

\bibitem{her}
J.G.~Herriot, \emph{N\"orlund summability of multiple Fourier series}, Duke Math. J.,
\textbf{11} (1944), 735-754.


\bibitem{JMZ}
B.~Jessen, J.~Marcinkiewicz and A.~Zygmund, \emph{{Note on the
  differentiability of multiple integrals}}, Fund. Math., \textbf{25} (1935),
  217--234 (English).

\bibitem{Mar}
J.~Marcinkiewicz, \emph{{Quelques th\'eor\`emes sur les s\'eries
  orthogonales}}, Ann Soc. Polon. Math., \textbf{16} (1937), 85--96.

\bibitem{mem}
N.~Memi\'c, \emph{{Estimates for the integral of maximal functions of Fej\'er kernel}},
Acta Math. Acad. Paed. Nyiregyh.,
\textbf{28} (2012),  no.~2, 177--187.



\bibitem{msw}
F.~M\'oricz, F.~Schipp, and W.R.~Wade, \emph{{Ces\`aro summability of double
  Walsh-Fourier series}}, Trans. Am. Math. Soc., \textbf{329} (1992), no.~1,
  131--140 (English).

\bibitem{sws}
F.~Schipp, W.R. Wade, P.~Simon, and J.~P\'al, \emph{{Walsh series: an
  introduction to dyadic harmonic analysis}}, Adam Hilger, Bristol and New
  York, 1990.

\bibitem{sjo}
P.~Sj\"olin, \emph{Convergence almost everywhere of certain
singular integrals and multiple Fourier series}, Ark.
Mat., \textbf{9}  (1971), no.~1-2, 65-90.


\bibitem{wmar}
F.~Weisz, \emph{{A generalization for Fourier transforms of a theorem due to
  Marcinkiewicz}}, J. Math. Anal. Appl., \textbf{252} (2000), no.~2, 675--695.

\bibitem{w5}
F.~Weisz, \emph{{Summability of multi-dimensional Fourier series and Hardy
  spaces. Mathematics and Its Applications}}, Kluwer Acad. publ, Dordrecht,
  Boston, London, 2002.

\bibitem{yan}
Sh.~Yano, \emph{{On Approximation by Walsh Functions}}, Proc. of the Amer. Math. Soc.,
 \textbf{2} (1951), no.~6, 962--967.


\bibitem{zhi}
L.V.~Zhizhiasvili, \emph{{Generalization of a theorem of Marcinkiewicz}}, Izv.
  Akad. nauk USSR Ser Mat., \textbf{32} (1968), 1112--1122.

\end{thebibliography}
\end{document}